\newcommand{\cA}{\mbox{$\cal{A}$}}
\newcommand{\cB}{\mbox{$\cal{B}$}}
\newcommand{\cC}{\mbox{$\cal{C}$}}
\newcommand{\cG}{\mbox{$\cal{G}$}}
\newcommand{\cS}{\mbox{$\cal{S}$}}
\newcommand{\cM}{\mbox{$\cal{M}$}}
\newtheorem{theorem}{Theorem}[section]
\newtheorem{lemma}[theorem]{Lemma}
\newtheorem{corollary}[theorem]{Corollary}
\newtheorem{conjecture}[theorem]{Conjecture}
\theoremstyle{definition}
\newtheorem{definition}[theorem]{Definition}
\title{On the Density of the Set of Known Hadamard Orders}
\author{Warwick de Launey and Daniel M. Gordon\thanks{The
authors are with the IDA Center for Communications Research,
4320 Westerra Court, San Diego, CA 92121 USA 
(email: \{warwick,gordon\}@ccrwest.org).}}
\begin{document}

\maketitle

\begin{abstract}
Let $S(x)$ be the number of $n \leq x$ for which a Hadamard matrix of
order $n$ exists.  Hadamard's conjecture states that $S(x)$ is about
$x/4$.  From Paley's constructions of Hadamard matrices, we have
that
\[
S(x) = \Omega\left( \frac{x}{\log x} \right).
\]

In a recent paper, the first author suggested that counting the
products of orders of Paley matrices would result in a greater
density.  In this paper we use results of Kevin Ford to show that 
it does: 
\begin{equation}\label{eq:abs}
S(x) \geq \frac{x}{\log x} \exp\left((C+o(1))(\log \log \log x)^2
\right)\,, \nonumber
\end{equation}
where $C=0.8178\ldots$.  

This bound
is surprisingly hard to improve upon.  We
show that taking into account all the other major known construction
methods for Hadamard matrices does not shift the bound.  Our arguments
use the notion of a (multiplicative) monoid of natural numbers.  We
prove some initial results concerning these objects.  Our techniques
may be useful when assessing the status of other existence questions
in design theory.
\end{abstract}

\nocite{crc}

\section{Introduction}
In this paper we use the idea of the density of a set of natural
numbers $\mathbb{N}$ to gauge the progress made so far on the Hadamard
Conjecture.  In addition, we propose that our methodology could be
used to assess the status of other existence problems in design
theory.

We take a moment to describe some ideas concerning (infinite) subsets
of $\mathbb{N}$, their sizes and their densities.  Section 2 covers
these and related ideas in more detail.  Given a set $\cA$ of positive
integers, we may define a {\em counting function}
$A:\mathbb{R}\rightarrow\mathbb{N}$, where $A(x)=\#\{n\leq x\;|\;n\in
\cA\}$.  The rate of growth of this function is used by number
theorists to gauge the size of the set $\cA$.  For example, the
counting function $\pi(x)$ of the set of primes is approximately equal to
$x/\log x$.

In this paper, 
sets will be in calligraphic font, and the counting function for a
set will be the same letter in roman font.
We will respectively
call the function $A(x)$, and the ratio function $A(x)/x$ the {\it
size} and {\it density} (functions) of the set $\cA$.  So the set of
odd natural numbers has size about $x/2$ and density about $1/2$.  

Hadamard's conjecture states:

\begin{conjecture}\label{conj:had}
  For every odd number $k$ there is a Hadamard matrix of order $2^s k$
  for $s \geq 2$.
\end{conjecture}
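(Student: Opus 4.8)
The statement labelled Conjecture~\ref{conj:had} is, of course, the Hadamard conjecture itself, one of the oldest and most stubborn open problems in combinatorial design theory, so what follows is necessarily a sketch of a \emph{research programme} rather than a genuine proof. The natural plan is divide-and-conquer. First I would reduce to prime orders: the Kronecker (Sylvester) product of Hadamard matrices of orders $m$ and $n$ is a Hadamard matrix of order $mn$, and a Hadamard matrix of order $2^t$ exists for every $t$, so it suffices to produce, for each odd $k$, a Hadamard matrix of order $4k$, and — using the multiplicativity of the known families — to concentrate on the case where $k$ is a prime or prime power. This is exactly the multiplicative structure (a monoid of ``known orders'') that the present paper sets out to study.

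Second, I would assemble the existing constructive families and try to patch them together so as to cover \emph{every} odd $k$: Paley type~I yields order $p+1$ for primes $p \equiv 3 \pmod 4$; Paley type~II yields order $2(q+1)$ for prime powers $q \equiv 1 \pmod 4$; Williamson's construction yields order $4n$ whenever four suitable symmetric matrices exist; and then Baumert--Hall and Goethals--Seidel arrays, Turyn-type sequences, and the block constructions of Wallis, Yang and others extend the reach still further. Each family is infinite, and together they already give $S(x) = \Omega(x/\log x)$; the task is to show that the union of their multiplicative closures is all of the odd integers, equivalently that $S(x) \sim x/4$.

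The hard part — the reason no such programme has ever been completed — is that every known construction is controlled by an arithmetic side condition (primality, the existence of a particular difference set, the existence of $\pm1$ sequences with prescribed autocorrelation), and these all cut out density-zero sets; there is no known family whose reach is a positive proportion of the odd numbers, and this paper shows that even the multiplicative closure of all of them together still falls short of positive density. A real proof would therefore have to supply something genuinely new: either a construction whose applicability is ``dense'' in the odd numbers, or a non-constructive existence argument — probabilistic or algebraic — for $\pm1$ matrices satisfying $H H^{\mathsf T} = n I$. The extreme rigidity of that orthogonality constraint is precisely what has defeated every attempt at the latter, so I would expect the construction side to be where the real work lies, and, pending such a breakthrough, the best one can honestly do is the careful density accounting carried out in the remainder of this paper.
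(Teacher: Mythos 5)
You have correctly recognized that the statement in question is the Hadamard conjecture itself, which the paper states as an open \emph{conjecture} and does not prove; indeed no proof is known. The paper's entire point is to measure partial progress toward it via the density $S(x)/x$, not to settle it. Your response — declining to offer a proof and instead sketching the standard reduction (Kronecker products, reduce to order $4k$, patch together Paley, Williamson, Baumert--Hall, Goethals--Seidel, etc.) and then explaining why every known family has density zero — is an accurate and honest account of the state of the art, and it is consistent with what the paper itself says in the introduction and in Section~\ref{sec:impact}. There is nothing to compare against, since the paper contains no proof of this statement, but your assessment of \emph{why} the conjecture resists proof (each construction is gated by a density-zero arithmetic condition, and even the multiplicative closure of all of them has density zero) is exactly the phenomenon that Theorems~\ref{corA}, \ref{LemmaB}, and \ref{thm:others} make precise. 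The only thing I would flag is a small overstatement: you write that it ``suffices to \ldots concentrate on the case where $k$ is a prime or prime power,'' but that reduction is not valid in general — the Kronecker product lets you \emph{combine} known orders, but there is no converse allowing you to deduce order $4p_1p_2$ from orders $4p_1$ and $4p_2$ being unknown individually yet their product being needed; the monoid generated by prime-power cases need not exhaust all orders $4k$, which is precisely why the density of the known set is subpolynomially small rather than $1/4$.
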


Let $\cS$ be the set of orders for which a Hadamard matrix exists, and
let $S(x)$ be the size function of $\cS$.  Then, since there are also
Hadamard matrices of orders 1 and 2, Conjecture~\ref{conj:had} is
equivalent to:

\begin{conjecture}\label{conj:us}
For every $x \geq 2$, 
\[
S(x) = \left\lfloor \frac{x}{4} \right\rfloor + 2\,.
\]
\end{conjecture}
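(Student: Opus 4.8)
The plan is to reduce the claimed identity to the two inequalities $S(x)\le\lfloor x/4\rfloor+2$ and $S(x)\ge\lfloor x/4\rfloor+2$ for all $x\ge2$. Since a Hadamard matrix of order $1$ (the matrix $[1]$) and one of order $2$ both exist, while neither $1$ nor $2$ is a multiple of $4$, for $x\ge2$ we have $S(x)=2+\#\{n\in\cS:4\mid n,\ n\le x\}$; hence the asserted formula is equivalent to the statement that every positive multiple of $4$ belongs to $\cS$. So I would prove the two bounds separately.

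For the upper bound I would invoke the classical necessary condition: a Hadamard matrix of order $n>2$ forces $n\equiv 0\pmod 4$. The quick argument normalizes the first row to all ones, observes that any two further rows each have exactly $n/2$ entries equal to $+1$, and then counts the four column-types determined by a fixed such pair of rows; orthogonality makes the two ``mixed'' counts equal and the two ``agreeing'' counts equal, whence $n=4a$ for an integer $a$. Consequently $\cS\cap[1,x]\subseteq\{1,2\}\cup\{4,8,\dots,4\lfloor x/4\rfloor\}$, so $S(x)\le\lfloor x/4\rfloor+2$. This half is entirely routine.

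The lower bound is the whole substance of the statement, and it is precisely Hadamard's conjecture in its original form (Conjecture~\ref{conj:had}): one must exhibit a Hadamard matrix of order $4k$ for \emph{every} $k\ge1$. The natural route is to assemble a toolbox of known constructions — Sylvester/Kronecker doubling via $H_m\otimes H_n$, the Paley Type~I and Type~II families (orders $q+1$ for prime powers $q\equiv3\pmod4$ and $2(q+1)$ for $q\equiv1\pmod4$), Williamson-type and Baumert--Hall array constructions, the Goethals--Seidel and Turyn constructions, products of $T$-sequences, and so on — and then argue that the set of orders these produce, closed under the multiplicative operations they support, exhausts the multiples of $4$. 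This is exactly where the argument collapses, and it is the main obstacle: Paley's construction alone yields only $\Omega(x/\log x)$ orders, the monoid generated by all of the standard families is still sparse (the present paper's own estimate only replaces $x/\log x$ by $\frac{x}{\log x}\exp((C+o(1))(\log\log\log x)^2)$, which is still $o(x)$), and no finite combination of current methods is known to cover an arbitrary prescribed order $4k$. Closing the gap would require a construction parametrized directly by $k$ rather than by the arithmetic of a nearby prime power; at present only finitely many small orders separate the verified range from the next (historically a single recalcitrant value at a time — order $428$, then order $668$ — has been the frontier), and so the statement as written must be regarded as open. The most one can honestly offer here is the reduction above together with the observation that all presently available techniques, even in combination, fall short of density $1$ among the multiples of $4$.
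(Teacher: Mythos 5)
You correctly recognize that this is a conjecture the paper does not (and cannot at present) prove: the paper's only commentary is the one-line observation that, since orders $1$ and $2$ exist and any other Hadamard order must be a multiple of $4$, the displayed identity is equivalent to Conjecture~\ref{conj:had}. Your reduction (upper bound via the standard $4\mid n$ necessary condition for $n>2$, lower bound as precisely Hadamard's conjecture, which remains open) is exactly the reasoning the paper leaves implicit, so the proposal is an honest and accurate account of the status of the statement.
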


There are a number of existence theorems for Hadamard matrices, but we
are far from being able to prove Conjecture~\ref{conj:us}.  The
conjecture implies that the set $\cS$ of Hadamard orders has density
$1/4$.  As yet, we have not even been able to prove that $\cS$ has
positive density.  In this paper we derive lower bounds for $S(x)$
using known existence theorems.  Using Paley's constructions we
immediately get a density of $O(x/\log x)$.  Using results on the
density of values of Euler's totient function we show:

\begin{theorem}\label{thm:main}
For all $\epsilon>0$, there is an element $x_\epsilon\in\mathbb{N}$ such 
that, for all $x>x_\epsilon$,  
\begin{equation}\label{eq:main}
S(x) \geq 
\frac{x}{\log x} \exp\left((C+\epsilon) (\log \log \log x)^2 \right)
\end{equation}
for $C = 0.8178\ldots$.
\end{theorem}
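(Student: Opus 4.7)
My plan is to combine Paley's constructions with the Kronecker product to produce a multiplicative submonoid of Hadamard orders, and then adapt Ford's proof on the density of Euler totient values to lower bound its counting function.

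First, for every prime $p$ with $p \equiv 3 \pmod{4}$, Paley's Type I construction yields a Hadamard matrix of order $p+1$. Since the Kronecker product of two Hadamard matrices is a Hadamard matrix of the product order, the multiplicative monoid $\cM$ generated by $\{p+1 : p \equiv 3 \pmod{4}\ \text{prime}\}$ is contained in $\cS$. Hence $S(x) \geq M(x)$, and it suffices to lower bound $M(x)$.

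Second, this bound on $M(x)$ is obtained by transferring Ford's analysis of $V(x) := \#\{\phi(n) \leq x\}$. Ford proves $V(x) \geq \frac{x}{\log x}\exp((C+o(1))(\log\log\log x)^2)$, and this lower bound is attained already by counting squarefree totient values $\phi(p_1 \cdots p_k) = \prod_{i=1}^{k}(p_i-1)$ over distinct primes. Ford's construction proceeds in stages: iteratively produce primes $p$ with $p-1$ of prescribed smooth shape via the prime number theorem in arithmetic progressions, then count the resulting products via a sieve-type argument. Each stage can be executed with the shift $-1$ replaced by $+1$ and with $p$ constrained to the residue class $3 \pmod{4}$: Siegel--Walfisz supplies enough such primes at each scale, and the combinatorial optimization that determines Ford's constant $C$ is insensitive to restricting the source primes to a positive-density subset.

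The principal technical obstacle is precisely this faithful adaptation: one must verify, step by step, that the sign change and congruence restriction leave the value of $C$ intact, with any induced losses absorbed into the $\varepsilon$ slack in the exponent. No structurally new ideas are introduced, but the bookkeeping is delicate, and the bound for $M(x)$ can be cited from Ford only after this verification is in place. Once it is, the inequality $S(x) \geq M(x)$ together with the resulting bound for $M(x)$ yields Theorem~\ref{thm:main}.
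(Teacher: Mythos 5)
Your plan is sound in spirit, and you have correctly identified the key ingredient — Ford's work on the distribution of totient values — and the overall reduction $S(x) \geq M(x)$ via Paley plus Kronecker products. However, there is a genuine gap: you propose to re-derive Ford's bound from scratch by "adapting" his proof, replacing the shift $p \mapsto p-1$ by $p \mapsto p+1$ and restricting to $p \equiv 3 \pmod 4$, and you explicitly defer the verification that the constant $C$ survives these changes. That verification is not a small bookkeeping matter; Ford's constant $C = 0.8178\ldots$ emerges from a delicate extremal optimization over prime chains, and showing it is preserved under your modifications would amount to reproving a large part of \cite{ford}.

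The paper avoids this entirely. Ford's paper does not only treat $\varphi$; he proves the asymptotic \eqref{eq:vden} for \emph{any} multiplicative function $f$ satisfying the two conditions \eqref{condition1}--\eqref{condition2}. The paper simply sets $f_2(p^k) = (p+1)^k$ and checks the two conditions: \eqref{condition1} holds because $f_2(p) - p = 1$ for every prime; \eqref{condition2} holds because $f_2(n) > \varphi(n)$, so the convergent sum for $\varphi$ dominates the one for $f_2$. With those two checks done, the entire machinery is cited, not re-derived, and one gets the count of integers $\leq x$ of the form $(p_1+1)^{\alpha_1}\cdots(p_k+1)^{\alpha_k}$ with the desired density. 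Your restriction to $p \equiv 3 \pmod 4$ also discards half the Paley matrices; the paper instead uses all primes (with the factor $2(p+1)$ when $p \equiv 1 \pmod 4$) and then handles the accumulated powers of two separately, showing via Ford's Theorem 10 that almost all relevant products involve only about $2C\log\log\log x$ prime factors, so the powers of two absorbed or divided out do not move the main term. In short: the route you propose could in principle be made to work, but the step you have deferred is exactly the hard part, and it is unnecessary — Ford's general theorem already covers the case $f(p) = p+1$.
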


In Section~\ref{sec:impact} we show that the bound \eqref{eq:main} is
the best we can obtain given the currently known major constructions
for Hadamard matrices.  This is perhaps surprising, since
\eqref{eq:main} is obtained by taking Kronecker products of Paley
Hadamard matrices only.  So one might expect that the bound could be
improved by incorporating the many other known constructions for
Hadamard matrices.

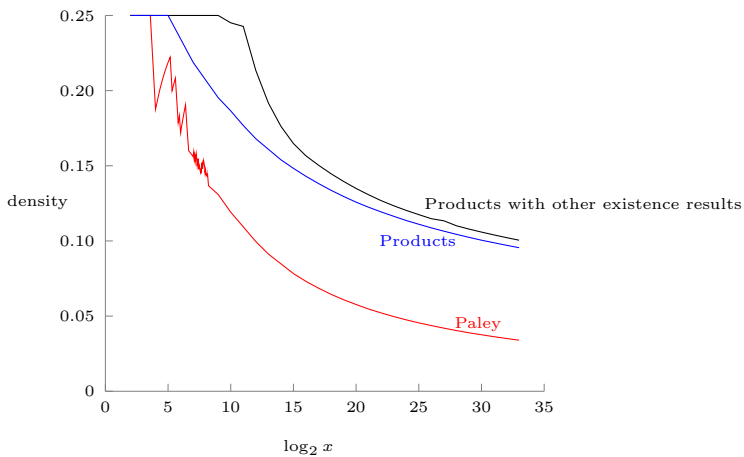
\begin{figure}[h]
  \centering
\begin{tikzpicture}[xscale=1.0,yscale=1.0]
\path[draw=red]
(0.333333,5.000000) 
--  (0.500000,5.000000) 
--  (0.597494,5.000000) 
--  (0.666667,3.750000) 
--  (0.720321,4.000000) 
--  (0.764161,4.166000) 
--  (0.801226,4.286000) 
--  (0.833333,4.376000) 
--  (0.861654,4.444000) 
--  (0.886988,4.000000) 
--  (0.909905,4.090000) 
--  (0.930827,4.166000) 
--  (0.967893,3.572000) 
--  (0.984482,3.666000) 
--  (1.000000,3.438000)
--  (1.014577,3.530000) 
--  (1.028321,3.612000) 
--  (1.041321,3.684000) 
--  (1.053655,3.750000) 
--  (1.065386,3.810000) 
--  (1.076572,3.636000) 
--  (1.097494,3.334000) 
--  (1.107309,3.200000) 
--  (1.166667,3.124000) 
--  (1.174066,3.182000) 
--  (1.181244,3.088000) 
--  (1.188214,3.142000) 
--  (1.194988,3.056000) 
--  (1.201575,3.108000) 
--  (1.207988,3.158000) 
--  (1.220321,3.000000) 
--  (1.226254,3.048000) 
--  (1.232053,3.096000) 
--  (1.243239,2.954000) 
--  (1.248642,3.000000) 
--  (1.264161,2.916000) 
--  (1.269118,2.960000) 
--  (1.273976,2.900000) 
--  (1.278737,3.040000) 
--  (1.283407,2.980000) 
--  (1.287987,3.018000) 
--  (1.292481,2.962000) 
--  (1.296893,3.000000) 
--  (1.301226,3.036000) 
--  (1.305482,3.070000) 
--  (1.317815,3.000000) 
--  (1.325699,2.904000) 
--  (1.329547,2.936000) 
--  (1.333333,2.890000) 
--  (1.340732,2.878000) 
--  (1.347911,2.868000) 
--  (1.351421,2.898000) 
--  (1.354880,2.858000) 
--  (1.358291,2.888000) 
--  (1.361654,2.848000) 
--  (1.368242,2.770000) 
--  (1.371470,2.734000) 
--  (1.500000,2.618000) 
--  (1.666667,2.382000) 
--  (1.833333,2.188000) 
--  (2.000000,1.992000) 
--  (2.166667,1.826000) 
--  (2.333333,1.696000) 
--  (2.500000,1.566000) 
--  (2.666667,1.462000) 
--  (2.833333,1.372000) 
--  (3.000000,1.290000) 
--  (3.166667,1.218000) 
--  (3.333333,1.154000) 
--  (3.500000,1.094000) 
--  (3.666667,1.042000) 
--  (3.833333,0.994000) 
--  (4.000000,0.950000) 
--  (4.166667,0.910000) 
--  (4.333333,0.874000) 
--  (4.500000,0.840000) 
--  (4.666667,0.808000) 
--  (4.833333,0.778000) 
--  (5.000000,0.752000) 
--  (5.166667,0.726000) 
--  (5.333333,0.702000) 
--  (5.500000,0.680000) ;
\draw[gray,very thin] (0.000000,0) -- (0.000000,0.1);
\draw (0.000000,0.000000) node [below] {\tiny 0};
\draw[gray,very thin] (0.833333,0) -- (0.833333,0.1);
\draw (0.833333,0.000000) node [below] {\tiny 5};
\draw[gray,very thin] (1.666667,0) -- (1.666667,0.1);
\draw (1.666667,0.000000) node [below] {\tiny 10};
\draw[gray,very thin] (2.500000,0) -- (2.500000,0.1);
\draw (2.500000,0.000000) node [below] {\tiny 15};
\draw[gray,very thin] (3.333333,0) -- (3.333333,0.1);
\draw (3.333333,0.000000) node [below] {\tiny 20};
\draw[gray,very thin] (4.166667,0) -- (4.166667,0.1);
\draw (4.166667,0.000000) node [below] {\tiny 25};
\draw[gray,very thin] (5.000000,0) -- (5.000000,0.1);
\draw (5.000000,0.000000) node [below] {\tiny 30};
\draw[gray,very thin] (5.833333,0) -- (5.833333,0.1);
\draw (5.833333,0.000000) node [below] {\tiny 35};

\draw[gray,very thin] (0,0.000000) -- (0.1,0.000000);
\draw (0.000000,0.000000) node [left] {\tiny 0};
\draw[gray,very thin] (0,1.000000) -- (0.1,1.000000);
\draw (0.000000,1.000000) node [left] {\tiny 0.05};
\draw[gray,very thin] (0,2.000000) -- (0.1,2.000000);
\draw (0.000000,2.000000) node [left] {\tiny 0.10};
\draw[gray,very thin] (0,3.000000) -- (0.1,3.000000);
\draw (0.000000,3.000000) node [left] {\tiny 0.15};
\draw[gray,very thin] (0,4.000000) -- (0.1,4.000000);
\draw (0.000000,4.000000) node [left] {\tiny 0.20};
\draw[gray,very thin] (0,5.000000) -- (0.1,5.000000);
\draw (0.000000,5.000000) node [left] {\tiny 0.25};
\draw[gray,very thin] (0.000000,0) -- (5.833333,0);
\draw[gray,very thin] (0,0.000000) -- (0,5.000000);
\path[draw=black]
(0.333333,5.000000) 
--  (0.500000,5.000000) 
--  (0.666667,5.000000) 
--  (0.833333,5.000000) 
--  (1.000000,5.000000) 
--  (1.166667,5.000000) 
--  (1.333333,5.000000) 
--  (1.500000,5.000000) 
--  (1.666667,4.902000) 
--  (1.833333,4.854000) 
--  (2.000000,4.268000) 
--  (2.166667,3.836000) 
--  (2.333333,3.526000) 
--  (2.500000,3.296000) 
--  (2.666667,3.132000) 
--  (2.833333,3.008000) 
--  (3.000000,2.894000) 
--  (3.166667,2.792000) 
--  (3.333333,2.698000) 
--  (3.500000,2.614000) 
--  (3.666667,2.536000) 
--  (3.833333,2.468000) 
--  (4.000000,2.406000) 
--  (4.166667,2.350000) 
--  (4.333333,2.296000) 
--  (4.500000,2.268000) 
--  (4.666667,2.202000) 
--  (4.833333,2.158000) 
--  (5.000000,2.118000) 
--  (5.166667,2.080000) 
--  (5.333333,2.044000) 
--  (5.500000,2.010000) ;
\path[draw=blue]
(0.333333,5.000000) 
--  (0.500000,5.000000) 
--  (0.666667,5.000000) 
--  (0.833333,5.000000) 
--  (1.000000,4.688000) 
--  (1.166667,4.376000) 
--  (1.333333,4.140000) 
--  (1.500000,3.906000) 
--  (1.666667,3.730000) 
--  (1.833333,3.536000) 
--  (2.000000,3.360000) 
--  (2.166667,3.220000) 
--  (2.333333,3.080000) 
--  (2.500000,2.966000) 
--  (2.666667,2.860000) 
--  (2.833333,2.764000) 
--  (3.000000,2.674000) 
--  (3.166667,2.592000) 
--  (3.333333,2.516000) 
--  (3.500000,2.448000) 
--  (3.666667,2.386000) 
--  (3.833333,2.328000) 
--  (4.000000,2.272000) 
--  (4.166667,2.222000) 
--  (4.333333,2.174000) 
--  (4.500000,2.130000) 
--  (4.666667,2.088000) 
--  (4.833333,2.048000) 
--  (5.000000,2.010000) 
--  (5.166667,1.976000) 
--  (5.333333,1.942000) 
--  (5.500000,1.910000) ;

\draw[black] (4.1,2.5) node [right] {\tiny Products with other existence results};

\draw[blue] (3.5,2) node [right] {\tiny Products};



\draw[red] (4.5,0.9) node [right] {\tiny Paley};

\draw[black] (2.7,-0.5) node [below] {\tiny $\log_2 x$};
\draw[black] (-0.9,2.75) node [below] {\tiny density};

\end{tikzpicture}
  \caption{Density of Hadamard orders from different constructions}
  \label{fig:density}
\end{figure}

Figure 1 shows plots of three lower bounds for $S(x)$.  These are
obtained by taking into account the orders of various classes of known
Hadamard matrices.  
The weakest bound is obtained using Paley orders $2^\alpha (p+1)$,
where
$\alpha \geq 1$ if $p \equiv 1 \pmod 4$, and $\alpha \geq 0$
otherwise.
The second bound takes products of these orders, and the best bound
adds 
products of the known Hadamard matrices of order up to 10000,
and the constructions described in Section~\ref{sec:impact}.
We show in that section 
that these two bounds are in fact asymptotically equal.  Indeed the
impact of the table of known orders less than 10000 seems to fade
quite rapidly.  

The Paley bound is weaker than the others, but is still stronger than
the bounds given by the asymptotic existence results proved by Seberry
and Craigen and Kharaghani.  Interestingly, Figure 1 shows that the
Hadamard Conjecture is decided in the affirmative for about one half
of the orders $n\equiv0\pmod{4}$ of size about one billion.  So at least
for ``small'' orders we are doing quite well.

We think that the notion of density has a wider applicability in the
context of design theory.  Typically, design theorists gauge the
progress on a problem by creating tables of known orders and undecided
cases.  For example, we now know that Hadamard's Conjecture holds for
nearly all orders less than $10\,000$.  However, many of the known
constructions arise from algebraic or computer constructions which may
fail to cover all cases as the upper bound on the orders to be covered
is increased.  So success for small orders may be misleading.

The existence question for Williamson matrices is a good example of
this phenomenom.  Williamson matrices of order $t$ can be used to
construct a Williamson-type Hadamard matrix of order $4t$.  In
\cite{gb} the authors obtain by computer search Williamson matrices of
order $23$, and thereby construct a Williamson-type Hadamard matrix of
order $92$.  Flushed with this success, they then suggest that
Williamson type Hadamard matrices exist for every order divisible by
four.  Indeed, subsequent computer searches confirmed that Williamson
matrices exist for all odd orders up to and including $33$.  However,
in \cite{dokovic} it was shown that no Williamson matrices exist for
order $35$, and since then additional computer searches
\cite{williamson} showed nonexistence for several more orders, so that
now the question of how common Williamson matrices are for larger
orders is quite unclear.

Therefore, there is a need for some other more global measure of the
status of a design-theoretic existence question.  Since such existence
questions usually involve two infinite sets: one consisting of the
decided orders and another consisting of the undecided orders, we
think that the discrepancy between the sizes of the set of undecided
orders and the set of decided orders provides a precise mathematical
measure of the progress made on such existence questions.

The rest of this paper is divided into three parts.  Section 2 derives
a series of lower bounds for $S(x)$.  These bounds are all implied by
Paley's construction for Hadamard matrices.  Section 2 also contains a
proof of Theorem~\ref{thm:main}.  Section 3 contains a proof that
taking into account the other major constructions for Hadamard
matrices does not lead to a larger lower bound for $S(x)$.  The proof
uses the idea of a monoid of natural numbers: i.e., a set of natural
numbers containing $1$ which is closed under multiplication.  The
final part of the paper is a technical appendix which proves two
results concerning monoids which are needed in Section 3.  The first
part of the appendix contains elementary proofs of the monoid theorems,
and the second part of the appendix gives proofs using results about
generating functions.

\section{Lower Bounds for $S(x)$ Using Paley Hadamard Matrices}
In this section, we use Paley's family of Hadamard matrices to obtain
three increasingly stronger lower bounds for $S(x)$.

\subsection{A Simple Lower Bound}
\begin{theorem}[Paley]\label{PaleyThm} 
For any prime $q$, there is a Hadamard matrix of order $n$, where
\[
n=\left\{
\begin{array}{rl}
    q+1, & \mbox{if\; $q \equiv 3 \pmod 4$,} \\
    2(q+1), & \mbox{if\; $q \equiv 1 \pmod 4$.} 
  \end{array}
\right.
\]
\end{theorem}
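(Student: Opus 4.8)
The plan is to use Paley's quadratic-residue construction, in the two forms dictated by the congruence class of $q$ modulo $4$. Fix the prime $q$ and let $\chi\colon\GF(q)\to\{-1,0,1\}$ be the Legendre symbol, extended by $\chi(0)=0$; thus $\chi$ is multiplicative and $\chi(-1)=(-1)^{(q-1)/2}$. I would first record the properties of the $q\times q$ \emph{Jacobsthal matrix} $Q=\bigl(\chi(a-b)\bigr)_{a,b\in\GF(q)}$, namely
\[
Q\mathbf{1}=\mathbf{1}^{\mathsf T}Q=\mathbf{0},\qquad
QQ^{\mathsf T}=qI_q-J_q,\qquad
Q^{\mathsf T}=\chi(-1)\,Q,
\]
where $\mathbf 1$ is the all-ones vector and $J_q=\mathbf 1\mathbf 1^{\mathsf T}$. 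The first and third identities are immediate from $\sum_{c\in\GF(q)}\chi(c)=0$ and $\chi(b-a)=\chi(-1)\chi(a-b)$. The middle identity is the one real arithmetic input: its $(a,b)$ entry is $\sum_{x}\chi(x)\chi\bigl(x+(b-a)\bigr)$, which equals $q-1$ when $a=b$ and, when $a\neq b$, reduces via the substitution $c\mapsto dc^{-1}$ (with $d=b-a$) to $\sum_{u\neq 0,1}\chi(u)=-1$. I expect this character-sum evaluation to be the only step needing genuine care; everything that follows is block linear algebra.

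Given the lemma, suppose first that $q\equiv3\pmod4$, so $\chi(-1)=-1$ and $Q$ is skew-symmetric. I would border $Q$ to the skew-symmetric $(q+1)\times(q+1)$ matrix
\[
C=\begin{pmatrix}0 & \mathbf 1^{\mathsf T}\\ -\mathbf 1 & Q\end{pmatrix}
\]
and set $H=I_{q+1}+C$, which visibly has all entries $\pm1$. A direct block computation from the three identities gives $C^2=-qI_{q+1}$, hence
\[
HH^{\mathsf T}=(I+C)(I-C)=I-C^2=(q+1)\,I_{q+1},
\]
so $H$ is a Hadamard matrix of order $q+1$.

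Now suppose $q\equiv1\pmod4$, so $\chi(-1)=1$ and $Q$ is symmetric. Then the bordered matrix $C=\left(\begin{smallmatrix}0 & \mathbf 1^{\mathsf T}\\ \mathbf 1 & Q\end{smallmatrix}\right)$ is symmetric with zero diagonal, $\pm1$ off-diagonal entries, and $C^2=qI_{q+1}$; but now $I+C$ is not orthogonal, so I would instead ``double'' $C$. Replace each diagonal $0$ of $C$ by the $2\times2$ block $R=\left(\begin{smallmatrix}1 & -1\\ -1 & -1\end{smallmatrix}\right)$ and each off-diagonal $\pm1$ by $\pm P$, where $P=\left(\begin{smallmatrix}1 & 1\\ 1 & -1\end{smallmatrix}\right)$; equivalently $H=C\otimes P+I_{q+1}\otimes R$, a $\pm1$ matrix of order $2(q+1)$. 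Since $PP^{\mathsf T}=RR^{\mathsf T}=2I_2$ and, crucially, $PR^{\mathsf T}+RP^{\mathsf T}=0$, expanding the Kronecker product yields
\[
HH^{\mathsf T}=C^2\otimes PP^{\mathsf T}+C\otimes\bigl(PR^{\mathsf T}+RP^{\mathsf T}\bigr)+I_{q+1}\otimes RR^{\mathsf T}=2q\,I+0+2\,I=2(q+1)\,I,
\]
so $H$ is a Hadamard matrix of order $2(q+1)$. Apart from the character sum in the lemma, the main point to get right is the choice of the two $2\times2$ blocks with the anticommuting cross-term $PR^{\mathsf T}=-RP^{\mathsf T}$; once these are in hand, both verifications are routine expansions.
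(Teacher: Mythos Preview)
The paper does not prove Theorem~\ref{PaleyThm}; it is simply stated and attributed to Paley, with the remainder of Section~2 immediately drawing corollaries from it. Your argument is the classical Paley construction and is correct: the Jacobsthal-matrix identities are verified accurately, the skew case $q\equiv3\pmod4$ yields $H=I+C$ with $HH^{\mathsf T}=(q+1)I$, and in the symmetric case $q\equiv1\pmod4$ the doubling via $H=C\otimes P+I\otimes R$ with the anticommuting pair $PR^{\mathsf T}+RP^{\mathsf T}=0$ gives $HH^{\mathsf T}=2(q+1)I$. All block computations check out, so there is nothing to compare against and nothing to correct.
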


Dirichlet's Theorem on primes in arithmetic progressions implies the
following corollary:

\begin{corollary}\label{cor:paley}
\[
S(x) \geq \left(\frac{3}{4}+o(1)\right) \frac{x}{\log x}.
\]
\end{corollary}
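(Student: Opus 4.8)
The plan is to extract from Theorem~\ref{PaleyThm} two families of Hadamard orders below $x$ --- one coming from primes $q\equiv 3\pmod 4$, the other from primes $q\equiv 1\pmod 4$ --- and to show that together they account for $\bigl(\tfrac34+o(1)\bigr)x/\log x$ integers. Concretely, set
\[
\cA=\{\,q+1 : q\ \text{prime},\ q\equiv 3\pmod 4,\ q\le x-1\,\},\qquad
\cB=\{\,2(q+1) : q\ \text{prime},\ q\equiv 1\pmod 4,\ q\le \tfrac{x}{2}-1\,\}.
\]
By Theorem~\ref{PaleyThm} every element of $\cA\cup\cB$ is a Hadamard order not exceeding $x$, so $S(x)\ge|\cA\cup\cB|=|\cA|+|\cB|-|\cA\cap\cB|$. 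Distinct primes give distinct members of $\cA$ (and of $\cB$), so $|\cA|=\pi(x-1;4,3)$ and $|\cB|=\pi(\tfrac{x}{2}-1;4,1)$, where $\pi(y;k,a)$ denotes the number of primes $\le y$ congruent to $a$ modulo $k$. (Allowing prime-power values of $q$ as well, as in the usual statement of Paley's construction, would add only $O(\sqrt{x})$ further orders and does not affect the constant.)

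Next I would invoke the prime number theorem for arithmetic progressions --- the quantitative sharpening of Dirichlet's theorem --- which, since $\phi(4)=2$, gives $\pi(y;4,1)\sim\pi(y;4,3)\sim\tfrac12\,y/\log y$. Hence $|\cA|\sim\tfrac12\,x/\log x$, and, using $\log(\tfrac{x}{2})\sim\log x$, $|\cB|\sim\tfrac12\cdot\tfrac{x/2}{\log x}=\tfrac14\,x/\log x$. Adding these yields the claimed main term $\bigl(\tfrac12+\tfrac14\bigr)x/\log x=\tfrac34\,x/\log x$.

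The only genuine obstacle is the cross term $|\cA\cap\cB|$, which must be shown to be $o(x/\log x)$. An integer $n$ lies in $\cA\cap\cB$ exactly when $n-1$ and $\tfrac{n}{2}-1$ are simultaneously prime --- equivalently, writing $q=n-1$ and $q'=\tfrac{n}{2}-1$, when $q'$ and $2q'+1$ are both prime (a Sophie-Germain-type pair). A standard upper-bound sieve for the simultaneous primality of two linear forms (Brun's sieve suffices) bounds the number of such $n\le x$ by $O\!\bigl(x/(\log x)^2\bigr)$, which is $o(x/\log x)$. It is worth noting that one cannot circumvent this sieve estimate by restricting the two families to residue classes modulo a power of $2$: they genuinely occupy overlapping residue classes for $n$, and refining the modulus merely relocates the overlap without lowering its density. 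Combining the three estimates, $S(x)\ge|\cA|+|\cB|-|\cA\cap\cB|=\bigl(\tfrac34+o(1)\bigr)x/\log x$, as required.
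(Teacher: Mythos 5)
Your proof is correct and takes essentially the same route as the paper: count orders from the two residue classes using the prime number theorem for arithmetic progressions, then bound the overlap by observing it corresponds to Sophie Germain prime pairs and applying Brun's sieve.
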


\begin{proof}
One has $(1/2+o(1)) x /\log x$ orders from primes $\equiv 3 \bmod 4$ up
to $x$, and $(1/4+o(1)) x /\log x$ from primes $\equiv 1 \bmod 4$ up
to $x/2$.  An order $m$ is in both sets if $p=m-1$ and $q=m/2-1$ are
both prime.  Since $2q+1 = p$, these are Sophie Germain primes.
Brun's sieve may be used to show that the number of Sophie Germain
primes up to x is $O(x / (\log x)^2)$ (see \cite{ribenboim}), so this
overlap does not affect the density.
\end{proof}

\subsection{An Improvement}
A Hadamard matrix of order $n$ can be used to construct one of order
$2n$, so we have ones of order $2^t (q+1)$ for $t \geq 1$ for all primes
$q$.  This improves the bound in Corollary~\ref{cor:paley}:

\begin{corollary}
\[
S(x) \geq \left(\frac{3}{2}+o(1)\right) \frac{x}{\log x}.
\]
\end{corollary}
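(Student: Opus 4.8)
\emph{Proof proposal.} The plan is to exhibit, for each $t\ge 0$, an explicit family of Hadamard orders of the shape $2^{t}(q+1)$ with $q$ prime, count each family by the prime number theorem (in arithmetic progressions), and sum over $t$; the geometric factor $\sum_{t\ge 1}2^{-t}=1$ is exactly what promotes the constant $\tfrac34$ of Corollary~\ref{cor:paley} to $\tfrac32$. Concretely, set
\[
\mathcal{F}_0=\{\,q+1:\ q\equiv 3\!\!\pmod 4\ \text{prime}\,\},\qquad
\mathcal{F}_t=\{\,2^{t}(q+1):\ q\ \text{prime}\,\}\quad(t\ge 1).
\]
By Theorem~\ref{PaleyThm} together with the doubling construction $n\mapsto 2n$ (which reaches $2^{t}(q+1)$ from the Paley order $q+1$ when $q\equiv 3\pmod 4$ and from $2(q+1)$ when $q\equiv 1\pmod 4$, using in the latter case that $t\ge 1$), every element of every $\mathcal{F}_t$ is a Hadamard order, so $S(x)\ge \#\big(\bigcup_{t\ge 0}\mathcal{F}_t\cap[1,x]\big)$.

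Fix a parameter $T=T(x)\to\infty$ with $T=o(\log x)$, say $T=\lfloor\log\log x\rfloor$. The first Bonferroni inequality gives
\[
S(x)\ \ge\ \sum_{t=0}^{T}\#(\mathcal{F}_t\cap[1,x])\ -\ \sum_{0\le t<t'\le T}\#(\mathcal{F}_t\cap\mathcal{F}_{t'}\cap[1,x]).
\]
For the main sum, $\#(\mathcal{F}_0\cap[1,x])=\pi(x-1;4,3)\sim\tfrac12\,x/\log x$, while for $1\le t\le T$ one has $\#(\mathcal{F}_t\cap[1,x])=\pi(x/2^{t}-1)\sim 2^{-t}x/\log x$, uniformly for $t\le T$, since $\log(x/2^{t})=\log x-t\log 2\sim\log x$ there and $x/2^{T}=x^{1-o(1)}\to\infty$. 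Hence the main sum is $\big(\tfrac12+\sum_{t=1}^{T}2^{-t}+o(1)\big)x/\log x=\big(\tfrac32-2^{-T}+o(1)\big)x/\log x=\big(\tfrac32+o(1)\big)x/\log x$.

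It remains to bound the double sum. If $n\in\mathcal{F}_t\cap\mathcal{F}_{t'}$ with $t<t'$, then $n=2^{t}(q+1)=2^{t'}(q'+1)$ forces $q+1=2^{d}(q'+1)$ with $d=t'-t\ge 1$; thus $q$ is a prime $\le x$ with $2^{d}\mid q+1$ and $(q+1)/2^{d}-1$ also prime. By Brun's sieve — exactly the tool already invoked for Corollary~\ref{cor:paley} in the case $d=1$ — the number of such $q$ is $O\big(x/(\log x)^{2}\big)$, the implied constant depending on $d$ at worst through a factor $\ll\log d$ coming from the singular series. Summing over the at most $\binom{T+1}{2}$ pairs gives a total of $O\big(T^{2}\log T\cdot x/(\log x)^{2}\big)=o(x/\log x)$ for our choice of $T$, and combining with the previous paragraph yields $S(x)\ge(\tfrac32+o(1))x/\log x$.

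The only genuine content is the overlap estimate, and it is routine; the point to get right is the bookkeeping that makes $2^{t}(q+1)$ available for all primes $q$ when $t\ge 1$ but only for $q\equiv 3\pmod 4$ when $t=0$, so that the series sums to precisely $\tfrac32$. One could instead index the families by the exact $2$-adic valuation of $q+1$ so that they become pairwise almost disjoint from the start, but this merely trades the single index $t$ for a double index and does not shorten the argument.
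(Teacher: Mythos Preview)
Your proof is correct and follows essentially the same approach as the paper: decompose into families $2^{t}(q+1)$, count each by the prime number theorem to get the geometric series summing to $\tfrac32$, and kill the pairwise overlaps via Brun's sieve with a cutoff at roughly $\log\log x$. The only cosmetic differences are that you invoke Bonferroni and a single truncation parameter $T$ explicitly, whereas the paper sums over all $k$ using a second-order form of the prime number theorem and then restricts to $r,s<2\log\log x$ only when bounding the overlaps.
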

\begin{proof}
We use the following slightly stronger version of the Prime Number Theorem:
\[
\pi(x) = \frac{x}{\log x} + \frac{x}{\log^2 x} +
O\left(\frac{x}{\log^3 x}\right)\,.
\]
As before, we have $(1/2+o(1)) x /\log x$ orders from the primes
$\equiv 3 \bmod 4$ up to $x$.

Now consider the set of orders $2(p+1)$ for all $p<x/2$.
The number of these orders is
\begin{eqnarray*}
\pi(x/2) & = & \frac{x/2}{\log (x/2)} + \frac{x/2}{\log^2 (x/2)} +
O\left(\frac{x}{\log^3 x} \right) \\
&=& \frac{x}{2\log x} + \left(\frac{1 + \log 2}{2}\right) \frac{x}{\log^2 x} +
O\left(\frac{x}{\log^3 x}\right).
\end{eqnarray*}
Similarly, from all primes $p<x/2^k$ for $k< \log x$
we get 
\begin{eqnarray*}
\pi(x/2^k) & = & \frac{x/2^k}{\log (x/2^k)} + \frac{x/2^k}{\log^2 (x/2^k)} +
O\left(\frac{x}{\log^3 x} \right) \\
&=&  \frac{x}{2^k\log x} + \frac{1 + k\log 2}{2^k} \frac{x}{\log^2 x} +
O\left(\frac{x}{\log^3 x}\right).
\end{eqnarray*}
orders of the form $2^k(p+1)$.  
Summing these terms, the coefficient of $x/\log x$ converges to $3/2$,
and the coefficient of $x/\log^2 x$ also converges.

The final step is to ensure that the intersection of the sets is small:
the number of orders $m$ with $p=m/2^r-1$ and $q=m/2^s-1$ for primes
$p$ and $q$ is $o(x/\log x)$.
As for Corollary~\ref{cor:paley}, the number of such orders for any
individual $r$ and $s$ is $O(x/\log^2 x)$ using Brun's sieve.
Furthermore, we need only consider $r, s < 2 \log \log x$, since
the number of primes $p$ up to $m/2^{2 \log \log x}$ is $O(x / \log^3 x)$,
and so the number of orders $m=2^r (p + 1)$ is $O(x/\log^2 x)$.
Combining these results, the number of orders in more than one set is
$o(x/\log x)$.
\end{proof}

\subsection{Proof of Theorem~\ref{thm:main}: Further Improvements Via Products of Paley Matrices}

Given Hadamard matrices of orders $a$ and $b$, it is easy to construct
a Hadamard matrix of order $ab$, but \cite{agayan} and \cite{csz} show
that we can do better:

\begin{theorem}\label{thm:prod1}
If Hadamard matrices of order $4a$ and $4b$ exist, then there
is a Hadamard matrix of order $8ab$.
\end{theorem}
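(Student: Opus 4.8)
The plan is to follow the multiplication construction of Agayan and of Craigen--Seberry--Zhang, whose engine is a \emph{halving lemma}: a Hadamard matrix of order $4a$ can be disassembled into $(1,-1)$ ingredient matrices of order $2a$ carrying enough orthogonality structure that, after combining with ingredients extracted from $H(4b)$, they can be reassembled into a Hadamard matrix of order $8ab$. Concretely, I would first show that the existence of a Hadamard matrix of order $4a$ forces the existence of two $(1,-1)$-matrices $A_1,A_2$ of order $2a$ forming an \emph{amicable orthogonal pair}, i.e. $A_1A_1^T+A_2A_2^T=4aI_{2a}$ and $A_1A_2^T=A_2A_1^T$: normalize the Hadamard matrix, cut it into $2a\times 2a$ blocks, and take suitable sums and differences of the blocks; the two displayed identities then drop out of the block form of $HH^T=4aI_{4a}$. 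Applying the same lemma to $4b$ yields $B_1,B_2$ of order $2b$.

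For the combination step I would use that $B_1,B_2$ have entries $\pm1$ to split them as $C_1=\tfrac12(B_1+B_2)$ and $C_2=\tfrac12(B_1-B_2)$, which are \emph{disjoint} $(0,\pm1)$-matrices with $C_1C_1^T+C_2C_2^T=2bI_{2b}$, and set
\[
P = A_1\otimes C_1 + A_2\otimes C_2,\qquad Q = A_1\otimes C_2 - A_2\otimes C_1 .
\]
Because the two Kronecker summands in each of $P$ and $Q$ have disjoint supports (inherited from $C_1,C_2$) while the $A_i$ are $(1,-1)$-matrices, both $P$ and $Q$ are genuine $(1,-1)$-matrices of order $4ab$; and a short computation, in which the amicability $A_1A_2^T=A_2A_1^T$ is exactly what annihilates the cross terms, gives $PP^T+QQ^T=8ab\,I_{4ab}$ (the factor $\tfrac12$ in $C_1,C_2$ is precisely what converts the trivial $16ab$ into the desired $8ab$). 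The remaining point is to upgrade the pair $(P,Q)$ to a Hadamard matrix of order $8ab$: one cannot in general plug $(P,Q)$ straight into the $2\times 2$ array $\left(\begin{smallmatrix}P&Q\\-Q^T&P^T\end{smallmatrix}\right)$, since $PQ^T-QP^T$ is a nonzero multiple of $I_{2a}\otimes(C_1C_2^T-C_2C_1^T)$ unless one of the two Hadamard matrices already has order $2a$ or $2b$. One therefore carries four ingredients through from the outset (a further split of each given matrix) and assembles them in a Williamson-/Goethals--Seidel-type $4\times 4$ array designed so that this leftover ``amicability defect'' is cancelled by the extra blocks; verifying that the resulting $4\times 4$ array is orthogonal is then a routine block multiplication.

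I expect the main obstacle to be exactly this last point: choosing the splits of the two given Hadamard matrices --- about which nothing is assumed beyond the Hadamard property --- and the sign pattern of the array so that \emph{all} the Kronecker cross-terms cancel simultaneously. The halving step is standard linear algebra and the final identity is mechanical; the real content is the bookkeeping that realizes the factor-of-two saving, producing order $8ab$ rather than the trivial $16ab$.
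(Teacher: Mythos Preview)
The paper does not prove this theorem; it is quoted from \cite{agayan} and \cite{csz} and used as a black box in the density estimates, so there is no in-paper argument to compare against. Your sketch is in the right family of ideas, and your disjoint $(0,\pm1)$ trick for forcing $\pm1$ entries is exactly the right device.

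Where your plan overshoots is the halving lemma. Asking for \emph{square} $\pm1$ matrices $A_1,A_2$ of order $2a$ with $A_1A_1^{T}+A_2A_2^{T}=4aI_{2a}$ and $A_1A_2^{T}=A_2A_1^{T}$ is the same as producing a quaternary complex Hadamard matrix of order $2a$ from $H(4a)$ (take $A_1+iA_2$); cutting a normalized $H(4a)$ into $2a\times 2a$ blocks yields the first identity, but the amicability does \emph{not} fall out of $HH^{T}=4aI$, and it is not clear that signed row/column permutations can always force it. This is precisely why you are then pushed into a $4\times4$ array to absorb the cross-terms. Agayan's construction avoids both issues by taking \emph{rectangular} halves: write $H=(H_1\ H_2)$ with $H_i$ of size $4a\times 2a$ and $K=\left(\begin{smallmatrix}K_1\\ K_2\end{smallmatrix}\right)$ with $K_j$ of size $2b\times 4b$, so that $K_1K_2^{T}=0$ is automatic. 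With your $C_1=\tfrac12(K_1+K_2)$, $C_2=\tfrac12(K_1-K_2)$ one gets $C_1C_2^{T}=C_2C_1^{T}=0$ and $C_1C_1^{T}=C_2C_2^{T}=2bI_{2b}$, and then the single $8ab\times 8ab$ matrix
\[
M=H_1\otimes C_1+H_2\otimes C_2
\]
already has $\pm1$ entries and satisfies $MM^{T}=(H_1H_1^{T}+H_2H_2^{T})\otimes 2bI_{2b}=4aI_{4a}\otimes 2bI_{2b}=8ab\,I_{8ab}$. No amicability hypothesis and no block array are needed; the fix to your outline is simply to pair the disjoint-support idea with row/column halves instead of square blocks.
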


\begin{theorem}\label{thm:prod2}
If Hadamard matrices of order $4a$, $4b$, $4c$ and $4d$, exist, then there
is a Hadamard matrix of order $16abcd$.
\end{theorem}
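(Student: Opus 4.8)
The plan is to reduce each hypothesis matrix, by a Turyn-type lemma, to a ``complex $\pm1$'' object of half its order, and then to combine all four of these \emph{at once} using the multiplicativity of the quaternion norm. It is worth first recording why a genuinely four-fold argument is needed: applying Theorem~\ref{thm:prod1} to the orders $4a,4b$, then to $4c,4d$, and then to the two resulting orders $8ab=4\cdot 2ab$ and $8cd=4\cdot 2cd$, produces only a Hadamard matrix of order $32abcd$, which exceeds the claimed $16abcd$ by a factor of $2$. The identity $2=1^2+1^2$ is what powers Theorem~\ref{thm:prod1}; the extra factor of $2$ must come from the richer identity $4=1^2+1^2+1^2+1^2$ together with the fact that the quaternions carry a multiplicative norm, which is what lets one absorb the imaginary parts of all four inputs in a single pass rather than in three successive ones.

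\textbf{Step 1 (reduce one matrix).} I would first establish the Turyn-type lemma that already sits behind Theorem~\ref{thm:prod1}: from a Hadamard matrix of order $4m$ one can build a pair of disjoint $\{0,\pm1\}$ matrices $P,Q$ of order $2m$ with $P\ast Q=0$, $PP^{T}+QQ^{T}=2m\,I_{2m}$, and $PQ^{T}=QP^{T}$; equivalently a matrix $C=P+iQ$ over $\{0,\pm1,\pm i\}$ with $CC^{*}=2m\,I_{2m}$. Applying this to the four hypotheses gives complex $\pm1$ matrices $C_{1},C_{2},C_{3},C_{4}$ of orders $2a,2b,2c,2d$, with $C_{1}C_{1}^{*}=2a\,I$, $C_{2}C_{2}^{*}=2b\,I$, $C_{3}C_{3}^{*}=2c\,I$, $C_{4}C_{4}^{*}=2d\,I$.

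\textbf{Step 2 (combine via quaternions).} Next, form the Kronecker-type product $C_{1}\otimes C_{2}\otimes C_{3}\otimes C_{4}$, but reinterpret the imaginary units of the four factors so that they spread among the quaternion units $i,j,k$ in a compatible way; because each $C_{\ell}$ involves only one imaginary unit its entries stay inside a commutative subring of $\mathbb{H}$, and the product is a matrix with entries in $\{0,\pm1,\pm i,\pm j,\pm k\}$ of order $(2a)(2b)(2c)(2d)=16abcd$. Using the realization of $\mathbb{H}$ by $2\times2$ complex blocks together with the disjointness $P_{\ell}\ast Q_{\ell}=0$, this quaternion matrix should collapse to a genuine real $\pm1$ matrix of order $16abcd$ (the step requiring care, below). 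That it is Hadamard is then checked block by block: the diagonal blocks give $(2a)(2b)(2c)(2d)\,I=16abcd\,I$ from the four norm relations, and the off-diagonal blocks vanish on account of the amicability relations $P_{\ell}Q_{\ell}^{T}=Q_{\ell}P_{\ell}^{T}$ and their transposes --- precisely the role those relations play in the quaternion (Williamson) array $OD(4;1,1,1,1)$.

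\textbf{The hard part.} The delicate point, and where essentially all the work lies, is the noncommutativity of $\mathbb{H}$: Kronecker products of quaternion-valued matrices depend on the order of the factors, and moving a conjugate past an imaginary unit costs a sign through the rule $jz=\bar z\,j$ for $z\in\langle i\rangle$. One must fix an order of substitution, carry every sign change, confirm that the mixed terms $C_{r}C_{s}^{*}$ appearing off the diagonal cancel in pairs, and --- above all --- verify that the passage back to a real matrix costs a factor of $2$ \emph{in total}, so that the order lands on $16abcd$ rather than on the $32abcd$ (or worse) that a careless quaternion tensor followed by realification would produce. Once this sign-and-order bookkeeping is in place the Hadamard identity is routine; one should then close by noting that $16abcd$ is a factor of $2$ smaller than what iterating Theorem~\ref{thm:prod1} gives, which is exactly the content of the theorem.
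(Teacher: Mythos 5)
The paper itself does not prove Theorem~\ref{thm:prod2}; it states Theorems~\ref{thm:prod1} and~\ref{thm:prod2} with a citation to the Agaian and Craigen--Seberry--Zhang papers and moves on. So there is no in-paper proof to compare against. Evaluated on its own terms, your sketch has a genuine gap in Step~1 in addition to the one you flag in Step~2.

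The Step~1 reduction asserts that a Hadamard matrix of order $4m$ always yields disjoint $\{0,\pm1\}$ matrices $P,Q$ of order $2m$ with $PP^{T}+QQ^{T}=2m\,I_{2m}$ and $PQ^{T}=QP^{T}$, equivalently a complex Hadamard matrix $C=P+iQ$ of order $2m$. (Since $CC^{*}=2m\,I_{2m}$ forces every row of $C$ to have full weight, $C$ would necessarily be a complex Hadamard matrix, with no zero entries.) This is not a known general fact, and it is \emph{not} what underlies Theorem~\ref{thm:prod1}: the standard implication runs the other way, a complex Hadamard matrix of order $n$ gives a real one of order $2n$, and a converse of the form $H(4m)\Rightarrow CH(2m)$ is not part of the toolbox. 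Indeed, Theorem~\ref{thm:prod1} admits a direct proof that never leaves the real numbers. Split $H_{1}$ of order $4m$ by columns as $(A\,|\,B)$ with $A,B$ of size $4m\times 2m$, and split $H_{2}$ of order $4n$ by rows as $\binom{C}{D}$ with $C,D$ of size $2n\times 4n$; then
\[
M=\tfrac{1}{2}\bigl[(A+B)\otimes C+(A-B)\otimes D\bigr]
\]
is an $8mn\times 8mn$ matrix whose entries are $\pm1$ (each entry equals $A_{ij}C_{kl}$ or $A_{ij}D_{kl}$ according as $A_{ij}=\pm B_{ij}$), and a short calculation using $AA^{T}+BB^{T}=4m\,I$, $CC^{T}=DD^{T}=4n\,I$, $CD^{T}=0$ gives $MM^{T}=8mn\,I$. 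No complex or quaternion arithmetic is involved, and nothing of half order is extracted from $H_{1}$ or $H_{2}$. The Craigen--Seberry--Zhang proof of the four-fold statement is an elaboration of this kind of row/column splitting fed into a Williamson-array pattern, again without passing through complex Hadamard matrices of half order.

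You also candidly label Step~2 as ``the hard part, where essentially all the work lies,'' and leave it unfilled; in a proof attempt that is a gap by your own account. But the more basic problem is that the plan is built on the Step~1 reduction, which neither holds in the generality you state nor is needed for the two-fold theorem you invoke as precedent. If you want to prove the four-fold theorem honestly, start instead from column and row splittings of the four Hadamard matrices into $\pm1$ blocks of the original order, and look for a four-term analogue of the expression $M$ above built on the OD$(4;1,1,1,1)$ (Williamson) array; this is the route the cited paper actually takes.
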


We want to show that applying these theorems to Paley Hadamard
matrices will give us a greater density.  An improvement follows
immediately from a result of Erd\H{o}s.  He showed that the number of
different values of $m=(p+1)(q+1)$ up to $x$, for $p$ and $q$ prime,
is $(1+o(1))\frac{ x (\log \log x)}{\log x}$.  Thus
\[
S(x)\geq 
(1+o(1))\frac{ x }{\log x}(\log \log x)\,.
\]
Thus we have an immediate improvement by taking into account
Theorems~\ref{thm:prod1} and~\ref{thm:prod2}.

A further improvement follows from theory that has been developed to
analyze the distribution of values of the Euler totient function.  The
new bound (which is somewhat complicated) will imply that, for any
$\alpha>0$,
\[
S(x)\geq 
(1+o(1))\frac{ x }{\log x}(\log \log x)^\alpha\,.
\]

Recall that the Euler totient function $\varphi(n)$ is the number of
positive integers less than $n$ which are relatively prime to $n$.
This is a multiplicative function with value at prime powers:
\[ 
\varphi(p^a) = p^{a-1} (p-1)\,.
\]
Let $V(x)$ be the number of distinct values of Euler's
$\varphi$-function less than $x$.  The study of the growth of $V(x)$
has a long history.  In 1929 Pillai \cite{pillai} showed
\[
V(x) \ll \frac{x}{\log^{\log 2/e} x}\,.
\]
In 1935 Erd\H{o}s \cite{erdos} improved this to 
\[
V(x) \ll \frac{x}{\log^{1+o(1)} x}\,.
\]
The $o(1)$ was subsequently made more precise by Erd\H{o}s and Hall,
Pomerance, Maier and Pomerance, and finally Ford \cite{ford}, who showed
  \begin{eqnarray}\label{eq:vden}
V(x) &=& \frac{x}{\log x} \exp \left( C (\log \log \log x - \log \log
  \log \log x)^2 \right. \\
 &&\ \ \ \ \ \ \  \left.+ D \log \log \log x  
- (D+1/2-2C) \log \log \log \log x + O(1) \right),\nonumber
\end{eqnarray}
where $C= 0.8178\ldots$ and  $D=2.1769\ldots$.

Ford proved that this bound applies to {\em any} multiplicative
function $f$ satisfying two conditions:
\begin{eqnarray}\label{condition1}
&&  \{f(p)-p: p \ {\rm prime} \} 
\ {\rm is \ a\  finite  \ set\  not\  containing}\  0 \\
&&\sum_{h\geq16\ squareful}
\frac{\epsilon(h)}{f(h)} \ll 1, \ \epsilon(h) = \exp(\log \log h (
\log \log \log h)^{20}).\label{condition2}
\end{eqnarray}
Note that $n\in\mathbb{N}$ is squareful if, for all primes $p$,
$p\vert n$ implies $p^2\vert n$.

\begin{proof} (Proof of Theorem~\ref{thm:main})
We now use Ford's general theory to prove Theorem~\ref{thm:main}.  We
take $f(p^k)=f_2(p^k)=(p+1)^k$.  Then condition \eqref{condition1}
holds.  Moreover, $f_2(x)>\varphi(x)$; so \eqref{condition2} holds for
$f=f_2$, since it holds for $f=\varphi$.  Thus Ford's result implies,
that the set of integers up to $x$ of the form
\begin{equation}
  \label{eq:pplusone}
(p_1+1)^{\alpha_1} (p_2+1)^{\alpha_2} \cdots (p_k+1)^{\alpha_k}  
\end{equation}
has density of the same form as the righthand side of (\ref{eq:vden}).
This expression is only determined up to the ``$O(1)$'' term in the
exponent.  Nevertheless, since (by Theorems~\ref{PaleyThm} and
\ref{thm:prod1}) there are Hadamard matrices for all orders $2t$,
where $t$ has the form \eqref{eq:pplusone}, $S(x)$ is bounded below by
a function of the form on the righthand side of \eqref{eq:vden}.
Theorem~\ref{thm:main} now follows.
\end{proof}
One issue, involving powers of two, remains to be discussed.  For
each prime $p_i \equiv 1 \bmod 4$ in (\ref{eq:pplusone}), the order of
the Paley matrix is $2(p_i+1)$, not $p_i+1$.  However, this is offset
by Theorems~\ref{thm:prod1} and \ref{thm:prod2}, which show that if
$\alpha_1 + \alpha_2 + \cdots + \alpha_k = A$, we may divide
(\ref{eq:pplusone}) by a factor of two raised to the power:
$$
4\left\lfloor (A-1)/3 \right\rfloor + ((A-1) \bmod 3).
$$

Potentially this could give us an increase in our lower bound for
$S(x)$, say if we had a large number of integers in $\cS(x)$ with
$\sum \alpha_i \geq \log \log x$.  However, Ford's Theorem 10 (and its
generalization to other multiplicative functions) shows that almost
all integers in $\cS(x)$ have
\[
\sum_i \alpha_i = 2C(1+o(1)) \log \log \log x
\]
as $x \longrightarrow \infty.$
Therefore the savings from dividing out by powers of two does not
affect the main term in Theorem~\ref{thm:main}.

\section{The Impact of Other Constructions}\label{sec:impact}

In this section, we show that our best lower bound for $S(x)$ cannot
be improved by taking into account other large classes of known
Hadamard matrices.  In order to do so, we introduce the following key 
idea:

\begin{definition}
A subset $\cA$ of $\mathbb{N}$ is called a (multiplicative) monoid
if 
\begin{itemize}
\item $1\in \cA$, and 
\item $a,b\in\cA$ implies $ab\in\cA$.
\end{itemize}  
The set $\cG$ generates a monoid $\cM$ if every element in $\cM$ is a
product of elements in $\cG$.
\end{definition}
Notice that if $\cA$ and $\cB$ are monoids, then the product set
$\cA\cB=\{ab\;:\;a\in\cA,\; b\in\cB\}$ is a monoid.

Our interest in monoids stems from the observation that the set of
known Hadamard orders is closed under multiplication: i.e., the
product $n_1n_2$ of two known Hadamard orders $n_1,n_2$ is also a
known Hadamard order.  Indeed, any construction for Hadamard matrices
generates a monoid of known Hadamard orders via the Kronecker product
and the product results Theorems~\ref{thm:prod1} and~\ref{thm:prod2}.

Our overall plan in this section will be to determine the size of the
monoid generated by each major known construction, and then to
determine the size of the product of these monoids.

The following theorem allows us to determine the size of the products
of the monoids encountered in this section.  It is perhaps surprising
that 
taking finite products of monoids often does not give a
significantly larger monoid.
 

\begin{theorem}\label{corA}
Suppose that $\cA$, $\cB$, and $\cC = \cA\cB$ are monoids such that
$A(x)=O(x^{\alpha})$ and $B(x)=\Omega(x^\beta)$, where
$0<\alpha<\beta<1$.  Then $C(x)=O(B(x))$.
\end{theorem}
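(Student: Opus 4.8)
The plan is to reduce everything to the elementary counting inequality $C(x)\le\sum_{a\in\cA,\, a\le x}B(x/a)$, which holds because every element of $\cC\cap[1,x]$ has at least one representation $ab$ with $a\in\cA$, $b\in\cB$, $ab\le x$. The two facts I would exploit are: (i) the sparsity hypothesis $A(x)=O(x^{\alpha})$, which by partial summation gives $\sum_{a\in\cA}a^{-s}<\infty$ for every $s>\alpha$ — in particular for $s=\beta$ and for $s=1$ — together with the tail bound $\sum_{a\in\cA,\, a>Y}a^{-s}\ll Y^{\alpha-s}$; and (ii) the fact that $\cB$ is a monoid with $B(x)=\Omega(x^{\beta})$, which forces $B$ to grow regularly.

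First I would split the sum at $a=x^{1-\epsilon}$, choosing $\epsilon>0$ small enough that $\alpha+\epsilon<\beta$. For the large range $a>x^{1-\epsilon}$ we have $x/a<x^{\epsilon}$, so $B(x/a)\le B(x^{\epsilon})\le x^{\epsilon}$; since there are at most $A(x)=O(x^{\alpha})$ such $a$, this part contributes $O(x^{\alpha+\epsilon})=o(x^{\beta})=o(B(x))$, using $B(x)\gg x^{\beta}$. The whole difficulty is therefore concentrated in the range $a\le x^{1-\epsilon}$, where one must show $B(x/a)$ decays in $a$ relative to $B(x)$, i.e. essentially $\sum_{a\in\cA,\, a\le x^{1-\epsilon}}B(x/a)\ll B(x)$.

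To handle the small range I would use the monoid structure of $\cB$. Any monoid satisfies the superadditive bound $B(uv)\ge B(u)+B(v)-1$: multiply $\cB\cap[1,u]$ by the largest element $b^{*}$ of $\cB\cap[1,v]$ (the products are distinct and lie in $[b^{*},uv]$), and adjoin the $B(v)-1$ elements of $\cB\cap[1,b^{*})$, which all lie below $b^{*}$. Taking $u=x/a$ and $v=a$ (using only $\cB$-elements up to $a$) gives $B(x/a)\le B(x)-B(a)+1$; one can also shrink the relevant set of $a$, since if $a=ef$ with $e\in\cA$, $e<a$, $f\in\cB$, $f\ge 2$, then $ab=e\cdot(fb)$ shows $b$ is never the largest $\cB$-part of $ab$, so only ``$\cB$-primitive'' $a\in\cA\setminus\cB$ matter. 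The main obstacle, which I expect to be the heart of the argument, is to convert ``$\cB$ a monoid with $B=\Omega(x^{\beta})$'' into the quantitative regularity actually needed — concretely something like ``$B(t)/t^{\gamma}$ is eventually nondecreasing for a suitable $\gamma\in(\alpha,\beta]$,'' which would then give $\sum_{a\in\cA}B(x/a)\ll B(x)\sum_{a\in\cA}a^{-\gamma}\ll B(x)$ because $\gamma>\alpha$. Since this regularity is essentially Tauberian, I would expect the clean proof to pass through the Dirichlet series $F_{\cB}(s)=\sum_{b\in\cB}b^{-s}$: its abscissa of convergence is $\ge\beta$, that of $F_{\cA}$ is $\le\alpha$, so $F_{\cC}(s)\le F_{\cA}(s)F_{\cB}(s)$ is finite for $s$ just above $\beta$, after which a partial-summation comparison bounds $C(x)$ by $B(x)$; this is presumably what the generating-function part of the appendix carries out, while the elementary route pushes the superadditivity estimate and the $\Omega$-bound far enough to bound the small-$a$ sum by hand.
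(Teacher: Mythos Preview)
Your setup and the large-$a$ range are fine, but the small-$a$ range is not completed. The superadditivity bound $B(x/a)\le B(x)-B(a)+1$ saves only an additive $B(a)$ per term, so summing over $a\in\cA\cap[1,x^{1-\epsilon}]$ still leaves a main term of order $A(x^{1-\epsilon})\,B(x)$, not $O(B(x))$. The Dirichlet-series sketch shows only that $F_{\cC}$ and $F_{\cB}$ share the same abscissa of convergence, which gives $C(x)=O(x^{\sigma+\epsilon})$ rather than $C(x)=O(B(x))$. The regularity you ask for (``$B(t)/t^{\gamma}$ eventually monotone for some $\gamma>\alpha$'') is exactly what is needed, but it does not follow from the bare hypotheses ``$\cB$ is a monoid and $B(x)=\Omega(x^{\beta})$''.

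The paper's elementary route is a different decomposition. It works with the half-interval counts $b(x)=|\cB\cap(x/2,x]|$ and $c(x)=|\cC\cap(x/2,x]|$, proves the dyadic inequality
\[
c(x)\ \le\ \sum_{k=1}^{\lceil\log_2 x\rceil}\bigl(b(x/2^{k-1})+b(x/2^{k})\bigr)\,\bar{a}(2^{k}),
\]
and shows the right side is $\le c_{1}\,b(x)$ uniformly in $x$; telescoping over dyadic scales then gives $C(x)\le c_{1}B(x)$. To bound the sum the paper inserts $A(2^{k})\ll 2^{k\alpha}$, $B(x/2^{k-1})\ll (x/2^{k-1})^{\beta}$, and $b(x)\gg x^{\beta}$. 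Note that the last two estimates amount to using $B(x)=\Theta(x^{\beta})$, not merely $\Omega(x^{\beta})$ as stated; the generating-function proof in the appendix likewise imposes a regularity hypothesis ($b_{n-1}/b_{n}\to\text{limit}$) before invoking the product theorem. With that implicit upper bound $B(y)\ll y^{\beta}$ available, your own approach also finishes in one line: $\sum_{a\in\cA}B(x/a)\ll x^{\beta}\sum_{a\in\cA}a^{-\beta}\ll x^{\beta}\ll B(x)$, since $\beta>\alpha$ makes $\sum_{a\in\cA}a^{-\beta}$ converge. So the ``missing regularity'' you correctly flagged is indeed the crux; the paper supplies it as an extra (unstated) assumption rather than deriving it from the monoid structure.
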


So up to a constant factor, the product monoid has the same size as
the larger of the two monoids.  

We will also need a result which bounds the size of a monoid in terms
of the size of its generating sets.  The next theorem shows that if a
monoid has a fairly small generating set, then the monoid itself is
not much larger.

\begin{theorem}\label{LemmaB}
Let $\cG$ be a subset of $\mathbb{N}$ such
that $G(x)=O(x^{\alpha})$, for some
$\alpha\in(0,1)$.  
Let $\cM$ be the monoid generated by
$\cG$.  Then $M(x)=O(x^{\alpha+\epsilon}$) for all $\epsilon>0$.
\end{theorem}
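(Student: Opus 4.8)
The plan is to count the elements of $\cM$ up to $x$ by stratifying according to how many generators (with multiplicity) are multiplied together. Write $t = \lceil \log_2 x\rceil$; since every generator in $\cG$ is at least $2$ (we may assume $1\notin\cG$, as $1$ contributes nothing to the monoid), any product of more than $t$ generators already exceeds $x$. Hence every $m\in\cM$ with $m\le x$ is a product of at most $t$ elements of $\cG$, each of which is itself $\le x$. So I would bound
\[
M(x) \;\le\; \sum_{j=0}^{t} \bigl(\text{number of ordered $j$-tuples from }\cG\cap[1,x]\bigr)
\;\le\; \sum_{j=0}^{t} G(x)^{j}\,.
\]
Using the hypothesis $G(x) = O(x^{\alpha})$, say $G(x)\le c\,x^{\alpha}$ for $x$ large, this is at most $(t+1)(c\,x^{\alpha})^{t}$ — which is far too weak, since $x^{\alpha t}$ is super-polynomial. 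The point of the argument is that a product of $j$ generators that lands in $[1,x]$ cannot have each factor as large as $x$: the factors must share the ``budget''. So the key refinement is to bound, for each $j$, the number of ordered $j$-tuples $(g_1,\dots,g_j)$ with $g_1\cdots g_j\le x$.

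For that count I would use a dyadic decomposition of each factor: assign to $g_i$ its ``level'' $\ell_i$ with $2^{\ell_i}\le g_i < 2^{\ell_i+1}$, so $\ell_i\ge 1$ and $\ell_1+\cdots+\ell_j \le \log_2 x =: L$. The number of generators at level $\ell$ is at most $G(2^{\ell+1}) \le c\,2^{\alpha(\ell+1)} = (c2^{\alpha})2^{\alpha\ell}$. Therefore the number of ordered $j$-tuples of generators whose product is $\le x$ is at most
\[
\sum_{\substack{\ell_1,\dots,\ell_j\ge 1\\ \ell_1+\cdots+\ell_j\le L}}
\;\prod_{i=1}^{j} (c2^{\alpha})\,2^{\alpha\ell_i}
\;=\; (c2^{\alpha})^{j}\!\!\sum_{\substack{\ell_i\ge 1\\ \sum \ell_i\le L}}\! 2^{\alpha(\ell_1+\cdots+\ell_j)}
\;\le\; (c2^{\alpha})^{j}\, 2^{\alpha L}\,\binom{L}{j}\,,
\]
since $2^{\alpha\sum\ell_i}\le 2^{\alpha L}$ and the number of compositions of an integer $\le L$ into $j$ positive parts is at most $\binom{L}{j}$. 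Summing over $0\le j\le L$ gives
\[
M(x)\;\le\; 2^{\alpha L}\sum_{j=0}^{L}(c2^{\alpha})^{j}\binom{L}{j}
\;=\; 2^{\alpha L}\,(1+c2^{\alpha})^{L}
\;=\; x^{\alpha}\cdot\bigl((1+c2^{\alpha})\bigr)^{\log_2 x}
\;=\; x^{\alpha}\cdot x^{\log_2(1+c2^{\alpha})}\,.
\]
This is still not good enough as stated, because the extra exponent $\log_2(1+c2^{\alpha})$ is a fixed positive constant, not an arbitrarily small $\epsilon$. The fix — and the real heart of the argument — is to split off the ``large'' generators from the ``small'' ones: choose a threshold $T=T(\epsilon)$ and note there are only finitely many generators below $T$ while the product can involve at most $\log_T x$ generators that are $\ge T$; redo the dyadic count keeping the constant $c2^{\alpha}$ under control by taking levels $\ell\ge \log_2 T$, so that $(c2^{\alpha})$ is replaced by a sum $\sum_{\ell\ge\log_2 T}(\cdots)$ that can be made $<1$, whence $(1+(\text{small}))^{\log_2 x}=x^{o(1)}$, in fact $x^{\epsilon}$ once $T$ is large enough. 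Handling the finitely many small generators separately only multiplies $M(x)$ by a further $x^{o(1)}$ factor (a bounded number of factors, each appearing with exponent $O(\log x)$, contributes a $(\log x)^{O(1)}$ or at worst polynomial-in-$\log$ overcount — in any case absorbed into $x^{\epsilon}$).

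The main obstacle, then, is precisely this last step: getting the exponent down from a fixed constant to an arbitrary $\epsilon>0$. The naive tuple count loses a constant in the exponent because the geometric-type sum $\sum_\ell (\#\text{generators at level }\ell)\,z^{\ell}$ has a constant term coming from the smallest levels; isolating a large threshold $T$ so that the tail of this sum is $<\epsilon'$ — while separately absorbing the finitely many generators below $T$, whose total multiplicative contribution to any product $\le x$ is at most $x$ and whose number of placements is polynomially bounded — is the one place where care is genuinely needed. Everything else (the bound $G(x)\le cx^\alpha$, the composition count $\binom{L}{j}$, the binomial-theorem summation over $j$) is routine. I would organize the write-up as: (i) reduce to $1\notin\cG$ and to products of $\le\log_2 x$ generators; (ii) fix $\epsilon$, choose $T$, and split generators into small ($<T$) and large ($\ge T$); (iii) dyadically count large-generator tuples to get a factor $x^{\alpha}\cdot x^{\epsilon/2}$; (iv) bound the contribution of small generators by $x^{\epsilon/2}$; (v) multiply.
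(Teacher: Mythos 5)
Your proposal is essentially correct and rests on the same two structural ideas as the paper's proof: dyadically stratify by $\lfloor\log_2 y_i\rfloor$, and split the generating set into a finite set of ``small'' generators (whose monoid is only $(\log x)^{O(1)}$ in size, hence $x^{o(1)}$) and the remaining ``large'' generators, with the threshold chosen as a function of $\epsilon$. Where the two arguments differ is in how they tame the combinatorial overcount for the large-generator monoid. The paper writes each element as an \emph{unordered} product $y_1\le\cdots\le y_r$ of large generators, pads the sequence $a_i=\lfloor\log_2 y_i\rfloor$ to a partition of $n=\lceil\log_2 x\rceil$, and invokes Hardy--Ramanujan to get $p(n)=O(x^\delta)$ for every $\delta>0$; the threshold $x_0$ is chosen so that $G_1(y)\le\tfrac12 y^{\alpha_0}$, which makes the per-partition count $\prod_i G_1(2^{a_i+1})\le 2^{n\alpha_0}$ with no accumulated constants, and the total is $p(n)\,2^{n\alpha_0}=O(x^{\alpha_0+\delta})$. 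You instead count \emph{ordered} tuples (compositions), which leaves you with a sum of the shape $\sum_j(c2^\alpha)^j\binom{L}{j}$; your fix -- raising the level floor to $\tau=\log_2 T$, which forces $j\le L/\tau$ and shrinks both $\binom{L}{j}$ and $(c2^\alpha)^j$ to $x^{o(1)}$ -- does work, and can be made fully rigorous either by the entropy bound $\binom{L}{L/\tau}\le 2^{LH(1/\tau)}$ or, more cleanly, by a Rankin-type weighting $\mathbf{1}[\sum\ell_i\le L]\le 2^{\beta(L-\sum\ell_i)}$ with $\beta=\alpha+\epsilon/2$, which reduces the count to a convergent geometric series once $\sum_{\ell\ge\tau}G(2^{\ell+1})2^{-\beta\ell}<1$. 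So the two routes share the small/large split and the dyadic bookkeeping but differ in the combinatorial device (partitions plus Hardy--Ramanujan vs.\ compositions plus a threshold/Rankin estimate); yours avoids the partition asymptotics and is arguably more elementary, but as written step (iii) is a sketch and needs the explicit estimate showing $\sum_{j\le L/\tau}(c2^\alpha)^j\binom{L}{j}=x^{o(1)}$ once $\tau$ is large.
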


Notice that a monoid has a unique minimal generating set: namely, the
set of elements in the monoid which are not the product of strictly
smaller elements of the monoid.  The theorem of course applies to any
generating set.  See the Appendix for proofs of Theorems~\ref{corA}
and~\ref{LemmaB}.

The following families are given in the survey article \cite{crc}:

\begin{enumerate}
\item Hadamard matrices exist for every order $\leq 662$.  Tables of
  known orders $2^t g$ are given for odd $g<9999$.
\item A Hadamard matrix of order $2^t g$ for odd $g$ exists for 
$$t \geq 6 \left\lfloor \frac{\log_2 \frac{g-1}{2}}{16} \right\rfloor + 2.$$
\item For $g$ odd with $k$ nonzero digits in its binary expansion,
  there is a Hadamard matrix of order $2^t g$ when
  \begin{enumerate}
  \item $g \equiv 1 \pmod 4$ and $t \geq 2k$,
  \item $g \equiv 3 \pmod 4$ and $t \geq 2k-1$.
  \end{enumerate}
\item 
For $q$ a prime power, $q \not \equiv 7 \pmod 8$ a Hadamard matrix of
order $4q^2$ exists.
\item For $q$ odd, a Hadamard matrix of order $4q^4$ exists.
\item If $n-1$ and $n+1$ are both odd prime powers, then there exists
  a Hadamard matrix of order $n^2$.
\end{enumerate}

We also note the large class of cocyclic\footnote{Cocyclic Hadamard
matrices correspond to certain relative difference sets.} Hadamard
matrices:
\begin{enumerate}
\item[7] Let $p_1,p_2,\dots,p_r\equiv1\pmod{4}$ and let
$q_1,q_2,\dots,q_s\equiv3\pmod{4}$ be prime powers.  Then, for all
$\alpha_1,\alpha_2,\dots,\alpha_r,\beta_1,\beta_2,\dots,\beta_s\geq0$,
there is a cocylic Hadamard matrix of order
\[
\prod_{i=1}^r  2p_i^{\alpha_i}(p_i+1)
\prod_{i=1}^s  q_i^{\beta_i}(q_i+1)\,.
\]
\end{enumerate}

We first observe that the orders in the last class form a monoid
$\cM_7$ whose size has the same form as $V(x)$.  To see this, we
define $f_3(p^k)=p^{k-1}(p+1)$, and then apply Ford's theorem.  Notice
that $f_3(x)>\varphi(x)$; so condition \eqref{condition2} holds for
$f_3$ since it holds for the totient function $\varphi$.  

Notice also that, if $\cM'_7$ includes all the orders obtained by
applying Theorems~\ref{thm:prod1} and~\ref{thm:prod2} to the orders
listed under item 7, then $\cM'_7$ contains all the orders identified
in the previous section.  Moreover, the argument at the end of Section
2 implies that $\cM_7$ and $\cM'_7$ have about the same size.

We now show that the Hadamard orders given by constructions 1--6 in
combination generate a monoid whose size is quite small.

\begin{theorem} \label{thm:others}
The monoid $\cM$ generated by all the orders given by constructions
1--6 has size $O(x^{8/11+\epsilon})$, where $\epsilon>0$ may be taken
as close to zero as one pleases.
\end{theorem}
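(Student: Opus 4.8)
The plan is to reduce Theorem~\ref{thm:others} to Theorem~\ref{LemmaB} by producing a small generating set for $\cM$. Since there is a Hadamard matrix of order $2$ (it appears in construction~1), we have $2\in\cM$, so $\cM$ is closed under doubling. Hence, if for each construction $i$ we let $\cG_i$ consist of $2$ together with the \emph{minimal} orders that construction produces --- for constructions~2 and~3 this is one order $2^{t_0(g)}g$ per admissible odd $g$, where $t_0(g)$ is the smallest admissible exponent, while construction~1 contributes only finitely many integers and constructions~4, 5 and~6 contribute the listed orders $4q^2$, $4q^4$ and $n^2$ --- then $\cM$ is generated by $\cG=\bigcup_{i=1}^6\cG_i$, since the orders not placed in $\cG$ are powers of $2$ times ones that were. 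It therefore suffices to prove $G(x)=O(x^{8/11})$ and apply Theorem~\ref{LemmaB} with $\alpha=8/11$.

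First I would isolate the dominant contribution, which is construction~2. There $t_0(g)=6\lfloor\tfrac1{16}\log_2\tfrac{g-1}{2}\rfloor+2=\tfrac38\log_2 g+O(1)$, so the minimal order attached to $g$ satisfies $2^{t_0(g)}g\asymp g^{1+3/8}=g^{11/8}$. Hence such an order is at most $x$ only for odd $g=O(x^{8/11})$, and distinct $g$ give distinct orders (the odd part recovers $g$), so $G_2(x)=O(x^{8/11})$. This is precisely the source of the exponent in the theorem: $\tfrac1{1+3/8}=\tfrac8{11}$.

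Next I would check that the remaining constructions contribute strictly less. Construction~1 gives $O(1)$. Constructions~4, 5 and~6 give the orders $4q^2$ ($q$ a prime power), $4q^4$ ($q$ odd) and $n^2$ ($n\pm1$ odd prime powers); their counting functions up to $x$ are $O(x^{1/2})$, $O(x^{1/4})$ and $O(x^{1/2})$ respectively. Construction~3 attaches to an odd $g$ with $k$ binary ones the minimal order $\asymp 2^{2k}g$; written in binary this is $g$ with $2k$ zeros appended, so it has $k$ ones lying in its top $L-2k$ bit positions, where $L=\lfloor\log_2 x\rfloor+1$, which forces $k\le L/3$. Summing the counts $\binom{L-2k}{k}$ over $k\le L/3$ and optimizing (take $k\approx cL$, then $t=c/(1-2c)\in(0,1)$) gives $G_3(x)=O(x^{c_3+\epsilon})$ with $c_3=\max_{0<t<1}H(t)/(1+2t)\approx0.55$, where $H$ is the binary entropy function; in particular $c_3<8/11$. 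Combining, $G(x)=O(x^{8/11})$, and Theorem~\ref{LemmaB} yields $M(x)=O(x^{8/11+\epsilon})$.

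The only step needing real care is the estimate for construction~3: dropping the coupling between $k$ and the usable bit length would only give something like $O(x^{H(1/3)+\epsilon})=O(x^{0.92})$, which is too weak, so one must keep the $2k$ forced trailing zeros --- which shrink the admissible range of $g$ --- before optimizing. Everything else is routine counting, and comparing the six exponents shows that construction~2 alone governs the bound.
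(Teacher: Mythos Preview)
Your proof is correct and follows essentially the same plan as the paper: bound the generating set contributed by each construction, identify construction~2 as the dominant one with exponent $\tfrac{1}{1+3/8}=8/11$, and finish with Theorem~\ref{LemmaB}. The paper differs only in organization (it bounds the sub-monoids $\cM_i$ first and then applies Theorem~\ref{LemmaB} to their union), and your construction-3 exponent $c_3\approx0.55$ is in fact sharper than the paper's claimed $\tfrac12+\epsilon$ --- the maximum of $\binom{n-2k}{k}$ occurs near $k\approx n/5$, not $n/4$ --- though this is immaterial since both lie below $8/11$.
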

\begin{proof}
We consider the constructions 1--6 listed above in order:
\begin{enumerate}
\item The first construction generates a monoid $\cM_1$ which has a
finite number of generators.  So $\cM_1\cap[1,x]$ has size $O((\log
x)^a)$, where $a$ is the number of generators.
\item Let $\cM_2$ be the monoid generated by the set $\cG_2$ of orders
given by the second construction.  Then $\cM_2$ is not much smaller
than the set
\[
\{2^tg\;|\; \mbox{where $g$ is odd and $t\ge
\epsilon\log_2 g$}\}\,,
\]
where $\epsilon=3/8$.  The number of elements in this set is about
equal to
\[
\sum_{g^{1+\epsilon}\ odd\ \leq x}\log_2(x/g^{1+\epsilon})
=O(x^{\tfrac{1}{1+\epsilon}}\log x)\,.
\]
\item To assess the size of the monoid $\cM_3$ given by this
construction, we apply Theorem~\ref{LemmaB}.  

Let $\cG_3$ be the set of orders $2^tg$ satisfying parts (a) and (b)
of item 3 above.  Then $\cG_3$ generates $\cM_3$.  We now estimate the
size of $\cG_3\cap[1,x]$.  Put $n=\lceil\log_2 x\rceil$, and suppose
$2^tg\in\cG_3\cap[1,x]$.  If $g \equiv 1 \pmod 4$ has has exactly $k$
digits equal to $1$, then the bottom $2k$ digits of the binary
expansion of $2^t g$ must be zero, and the remaining $n-2k$ digits
must contain exactly $k$ $1$s.  This gives at most ${n-2k\choose k}$
possibilities.  If $g\equiv 3\pmod{4}$ has exactly $k$ digits equal to
$1$, then the bottom $2k-1$ digits of the binary expansion of $2^t g$
must be zero, and the remaining $n-2k+1$ digits must contain exactly
$k$ $1$s.  This gives at most ${n-2k+1\choose k}$ possibilities.  So
\[
G_3(x)\leq\sum_{k}
{{n-2k+1}\choose k} + {{n-2k}\choose{k}}.    
\]
There are at most $n$ summands, and the largest of these occurs when
$k\approx n/4$.  So $G_3(x)=O(x^{\tfrac{1}{2}+\epsilon})$ for any
$\epsilon>0$.  Theorem~\ref{LemmaB} now implies that
$M_3(x)=O(x^{\tfrac{1}{2}+\epsilon})$ for any $\epsilon>0$.
\item
Constructions 4, 5 and 6 all give orders lying in the monoid $\cM_4$ of
square orders.  So the monoid generated by these orders has size
$O(x^{1/2})$.
\end{enumerate}

The monoids $\cM_1,\cM_2,\cM_3$ and $\cM_4$ all have size
$O(x^\delta)$ where $\delta>8/11$ may be taken as close to $8/11$ as
one pleases.  So Theorem~\ref{LemmaB} implies the result.
\end{proof}

From 
Theorem~\ref{thm:others} 
and 
Theorem~\ref{corA} 
we see that constructions 1--7 do not increase the
asymptotic bound for $S(x)$:

\begin{theorem}
The monoid $\cM_0$
generated by constructions 1--7 has size 
$$
\frac{x}{\log x} \exp\left((C+o(1)) (\log \log \log x)^2 \right)
$$
for $C = 0.8178\ldots$.
\end{theorem}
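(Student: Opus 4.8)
The plan is to combine the two structural results already in hand with the size estimates computed in this section. We have, on the one hand, the monoid $\cM_7'$ (equivalently $\cM_7$) coming from construction 7, which by the argument just given has size of the same form as $V(x)$, namely
\[
\frac{x}{\log x}\exp\left((C+o(1))(\log\log\log x)^2\right);
\]
and on the other hand the monoid $\cM$ of Theorem~\ref{thm:others}, generated by constructions 1--6, which has size $O(x^{8/11+\epsilon})$. The monoid $\cM_0$ generated by all of constructions 1--7 is precisely the product monoid $\cM\cdot\cM_7'$ (the product of two monoids is a monoid, as noted after the definition), since every element of $\cM_0$ is a product of generators drawn from the two groups, and conversely every such product lies in $\cM_0$.

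First I would record the lower bound: $\cM_0\supseteq\cM_7'$, so $M_0(x)\geq M_7'(x)$, which is already of the stated form. This direction requires nothing beyond the observation that the orders from construction 7, suitably closed under Theorems~\ref{thm:prod1} and~\ref{thm:prod2}, all lie in $\cM_0$, together with the fact established above that $\cM_7$ and $\cM_7'$ have the same size up to the $O(1)$ in the exponent.

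Second, and this is the substance, I would apply Theorem~\ref{corA} with $\cA=\cM$ and $\cB=\cM_7'$. To invoke it I need $A(x)=O(x^\alpha)$ and $B(x)=\Omega(x^\beta)$ with $0<\alpha<\beta<1$. For $\cA=\cM$ we have $A(x)=O(x^{8/11+\epsilon})$ from Theorem~\ref{thm:others}, so take $\alpha$ slightly above $8/11$. For $\cB=\cM_7'$, the size is $\frac{x}{\log x}\exp((C+o(1))(\log\log\log x)^2)$, which is certainly $\Omega(x^\beta)$ for any fixed $\beta<1$ --- in particular for some $\beta$ strictly between $8/11$ and $1$ --- because the exponential factor is subpolynomial and $x/\log x$ dominates any $x^\beta$ with $\beta<1$. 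Thus the hypotheses of Theorem~\ref{corA} are met, and we conclude $M_0(x)=C(x)=O(B(x))=O(M_7'(x))$. Combined with the lower bound $M_0(x)\geq M_7'(x)$, this pins down $M_0(x)$ to within a constant factor of $M_7'(x)$, hence $M_0(x)$ has size $\frac{x}{\log x}\exp((C+o(1))(\log\log\log x)^2)$, which is exactly the claim.

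The only real obstacle is bookkeeping: one must be sure that $\cM_0$ is genuinely the product $\cM\cdot\cM_7'$ and not something larger --- i.e., that closing constructions 1--7 under the Kronecker product and under Theorems~\ref{thm:prod1}--\ref{thm:prod2} produces nothing outside $\cM\cdot\cM_7'$. This follows because each of Theorems~\ref{thm:prod1} and~\ref{thm:prod2} takes orders already in the combined monoid to orders that are again products of the generators, so the monoid closure adds no new prime-level generators beyond those of $\cM$ and $\cM_7'$; and the constant factors absorbed into the ``$O$'' are harmless since the conclusion is only asserted up to the $o(1)$ in the exponent. A minor point to check is that the $o(1)$ terms from the two invocations (Ford's theorem for $\cM_7'$ and Theorem~\ref{corA} for the product) are compatible, but since a bounded multiplicative factor changes the exponent in \eqref{eq:vden} by only $O(1)$, this is immediate.
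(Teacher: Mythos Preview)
Your proposal is correct and follows essentially the same approach as the paper: combine Theorem~\ref{thm:others} (constructions 1--6 generate a monoid of size $O(x^{8/11+\epsilon})$) with Theorem~\ref{corA} (multiplying by a smaller monoid does not change the asymptotic) applied to $\cM$ and $\cM_7'$. The paper states this in a single sentence; your version spells out the verification of the hypotheses of Theorem~\ref{corA} and the bookkeeping that $\cM_0=\cM\cdot\cM_7'$, but there is no substantive difference in the argument.
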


\section*{Acknowledgements}

The authors would like to thank Carl Pomerance for directing them to
the literature on the distribution of Euler's function.

\bibliography{warwick}
\bibliographystyle{plain}

\appendix

\section{Monoids and Sets of Natural Numbers}\label{sec:monoid}

In this appendix, we 
prove two theorems showing that taking products of sets does not
greatly increase asymptotic density.  We give two sets of proofs; one
elementary and self-contained, and the other shorter but depending on
results on generating functions.




In this paper, we use some standard notation to discuss the growth of
the counting function of a set:
Let $f:\mathbb{N}\rightarrow\mathbb{R}$ be a function.  Then
\begin{itemize}
\item ``$A(x)=O(f(x))$'' means that there is a constant $C>0$ and
$x_0\in\mathbb{N}$ such that $A(x)<C f(x)$ for all $x\geq x_0$,
\item ``$A(x)=\Omega(f(x))$'' means that there is a constant $C>0$ and
$x_0\in\mathbb{N}$ such that $A(x)>C f(x)$ for all $x\geq x_0$.
\item ``$A(x)=\Theta(f(x))$'' means that there are constants $c_1>c_2>0$
and $x_0\in\mathbb{N}$ such that $c_1f(x)\geq A(x)\geq c_2f(x)$ for all 
$x\geq x_0$.
\item ``$A(x)=o(f(x))$'' means that for any constant $C>0$ there is some
$x_0\in\mathbb{N}$ such that $A(x) < C f(x)$ for all 
$x\geq x_0$.
\end{itemize}

\subsection{Elementary Proofs}

For any subset $\cA$ of $\mathbb{N}$ and any
$x\in\mathbb{N}$, let
\[
a(x)=|\cA\cap(x/2,x]|\qquad\mbox{and}\qquad\bar{a}(x)=|\cA\cap[x/2,x]|\,.
\]

\begin{lemma}\label{ProductDensityBoundLemma}\label{LemmaA}
Let $\cA,\cB$ and $\cC=\cA\cB$ be subsets of $\mathbb{N}$ which are monoids.
Then, for all $x\in\mathbb{N}$, 
\begin{equation}\label{ProductDensityBound}
\frac{c(x)}{b(x)}
\leq  \sum_{k=1}^{\lceil\log_2x\rceil}
    \frac{(b(x/2^{k-1})+b(x/2^k))\bar{a}(2^k)}{b(x)}\,.
\end{equation}
Moreover, if the righthand side is bounded by a constant $c_1$, say,
for all $x\in\mathbb{N}$, then $C(x)=\Theta(B(x))$.
\end{lemma}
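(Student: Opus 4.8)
The plan is to prove the pointwise inequality \eqref{ProductDensityBound} by a dyadic decomposition of the $\cA$-factor in a factorization $c=ab$, and then to upgrade it to a comparison of the full counting functions $B(x)$ and $C(x)$ by a second, complementary dyadic decomposition of $(0,x]$. No step looks genuinely hard; the argument is essentially two dyadic splittings spliced together, and the only thing that needs care is a factor-of-$4$ (rather than factor-of-$2$) range that shows up for the complementary factor.

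\textbf{The inequality \eqref{ProductDensityBound}.} I would argue as follows. Take any $c\in\cC\cap(x/2,x]$ and fix one representation $c=ab$ with $a\in\cA$, $b\in\cB$. Since $1\le a\le c\le x$, there is an index $k$ with $1\le k\le\lceil\log_2 x\rceil$ and $2^{k-1}\le a\le 2^k$; record such a $k$ for $c$. For a fixed $k$ there are at most $\bar a(2^k)=|\cA\cap[2^{k-1},2^k]|$ choices for $a$, and once $a$ is fixed the complementary factor satisfies $b=c/a\in(x/(2a),\,x/a]\subseteq(x/2^{k+1},\,x/2^{k-1}]$. This last interval spans a factor of $4$, so it is covered by the two dyadic blocks $(x/2^{k+1},x/2^k]$ and $(x/2^k,x/2^{k-1}]$, which contain $b(x/2^k)$ and $b(x/2^{k-1})$ elements of $\cB$ respectively. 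Hence at most $\bar a(2^k)\bigl(b(x/2^{k-1})+b(x/2^k)\bigr)$ values $c$ are recorded with that $k$, and since every $c\in\cC\cap(x/2,x]$ is recorded with some $k\le\lceil\log_2 x\rceil$,
\[
c(x)\ \le\ \sum_{k=1}^{\lceil\log_2 x\rceil}\bar a(2^k)\bigl(b(x/2^{k-1})+b(x/2^k)\bigr);
\]
dividing by $b(x)$ gives \eqref{ProductDensityBound}.

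\textbf{The conclusion $C(x)=\Theta(B(x))$.} Because the half-open intervals $(x/2^{j+1},x/2^j]$, $j\ge0$, partition $(0,x]$, for any subset $\cB\subseteq\Nnum$ one has the finite-sum identity $B(x)=\sum_{j\ge0}b(x/2^j)$, and likewise $C(x)=\sum_{j\ge0}c(x/2^j)$. Assuming the right-hand side of \eqref{ProductDensityBound} is $\le c_1$ for all $x$, this together with the inequality just proved says $c(x)\le c_1\,b(x)$ for all $x$; summing over the scales $x/2^j$,
\[
C(x)=\sum_{j\ge0}c(x/2^j)\ \le\ c_1\sum_{j\ge0}b(x/2^j)=c_1\,B(x),
\]
so $C(x)=O(B(x))$. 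Conversely $1\in\cA$ forces $\cB\subseteq\cA\cB=\cC$, hence $B(x)\le C(x)$ and $C(x)=\Omega(B(x))$; thus $C(x)=\Theta(B(x))$.

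\textbf{Where the friction is.} I expect no real obstacle, only bookkeeping. The one substantive point is that fixing $a$ in a dyadic block $[2^{k-1},2^k]$ confines $b=c/a$ to an interval with length-ratio $4$, not $2$ — which is exactly why two consecutive terms $b(x/2^{k-1})$ and $b(x/2^k)$ appear in the sum; everything else is an off-by-one check on the range of $k$. A minor edge case is $b(x)=0$ for some $x$ (possible when $\cB$ is very sparse): there the hypothesis forces the sum in the displayed bound to vanish, so $c(x)=0$ and $c(x)\le c_1 b(x)$ still holds trivially, leaving the argument unchanged.
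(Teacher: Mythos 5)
Your proof is correct and follows essentially the same route as the paper's: the same dyadic split of the $\cA$-factor into blocks $[2^{k-1},2^k]$, the same observation that the complementary $\cB$-factor then lies in $(x/2^{k+1},x/2^{k-1}]$ and so contributes $b(x/2^{k-1})+b(x/2^k)$, and the same dyadic summation $C(x)=\sum_j c(x/2^j)$ combined with $\cB\subseteq\cC$ (since $1\in\cA$) to get the two-sided bound. You make the factor-of-$4$ bookkeeping and the $b(x)=0$ edge case explicit, which the paper leaves implicit, but the argument is the same.
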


\begin{proof}
Since every element of $\cC\cap(x/2,x]$ can be written in the form
$ab$, where, for some $k\in\{1,2,\dots,\lceil\log_2x\rceil\}$, $a\in
\cA\cap[2^{k-1},2^k]$ and $b\in \cB\cap(x/2^{k+1},x/2^{k-1}]$, we have
\[
c(x)
\leq  \sum_{k=1}^{\lceil\log_2x\rceil}
    (b(x/2^{k-1})+b(x/2^k))\bar{a}(2^k)\,.
\]
Dividing through by $b(x)$ then gives \eqref{ProductDensityBound}.
We now prove the second part of the lemma.  By hypothesis, we have
\begin{align*}
c(x)
\leq b(x)\left\{  \sum_{k=1}^{\lceil\log_2x\rceil}
    \frac{(b(x/2^{k-1})+b(x/2^k))\bar{a}(2^k)}{b(x)} \right \}
\leq c_1 b(x)\,.
\end{align*}
Now we have the following partition 
\[
\cC\cap[1,x]=\bigcup_{k=1}^{\lceil\log_2 x\rceil}\cC\cap(x/2^k,x/2^{k-1}]
\]
for $\cC\cap[1,x]$ and a similar partition for $\cB\cap[1,x]$.
So
\[
C(x)= \sum_{k=1}^{\lceil\log_2x\rceil} c(x/2^{k-1})
\leq c_1\sum_{k=1}^{\lceil\log_2x\rceil} b(x/2^{k-1})
=c_1B(x)\,.
\]
Since $\cB\subset\cC$, we then have $B(x)\leq C(x) \leq c_1B(x)$.
This completes the proof of the second part of the lemma.
\end{proof}

{\em Proof of Theorem~\ref{corA}:}
For some constants $c_1,c_2>0$,
\[
b(x)
= B(x)-B(\lfloor{x/2}\rfloor)
\geq c_1 x^{\beta}-c_2 \left(\frac{x}{2}\right)^{\beta}
= \left(\frac{x}{2}\right)^{\beta}(2^{\beta}c_1-c_2)
\]
Now
\begin{align*}
\sum_{k=1}^{\lceil\log_2x\rceil}
    \frac{(b(x/2^{k-1})+b(x/2^k))\bar{a}(2^k)}{b(x)}
&\leq \sum_{k=1}^{\lceil\log_2x\rceil}\frac{B(x/2^{k-1})A(2^k)}{b(x)}\\
&\leq c_3 \sum_{k=1}^{\lceil\log_2x\rceil}
        \left(\frac{x}{2^{k-1}}\right)^{\beta}2^{k\alpha}
        \left(\frac{2}{x}\right)^{\beta}\\
&\leq c_4 \sum_{k=1}^{\lceil\log_2x\rceil} 
     2^{k(\alpha-\beta)}\,,
\end{align*}
which is bounded since $\alpha<\beta$.  So
Lemma~\ref{ProductDensityBoundLemma} implies that $C(x)=\Theta(B(x))$.

We now prove Theorem~\ref{LemmaB}, that 
the size of a monoid is at most slightly bigger than its generating set:

{\em Proof of Theorem~\ref{LemmaB}:}
Fix $\epsilon>0$.  We prove $M(x)=O(x^{\alpha+\epsilon})$.  Put
$\alpha_0=\alpha+\epsilon/2$.  Let $x_0$ be such that $G(x)\leq
\tfrac{1}{2}x^{\alpha_{0}}$ for all $x\geq x_{0}$.  Let
$\cG_{0}=\cG\cap [1,x_0)$, and let $\cG_{1}=\cG \cap [x_0,\infty)$.
Let $\cM_{0}$ be the monoid generated by $\cG_{0}$, and let $\cM_{1}$
be the monoid generated by $\cG_{1}$.  Then the following statements
hold:
\begin{enumerate}
\item[(A)]
$G_1(x) \leq \tfrac{1}{2}(x^{\alpha_0})$,
\item[(B)]
$M_{0}(x)=O((\log x)^{|\cG_0|})$,
\item[(C)]
$\cM = \cM_{0}\cM_{1}$,
\item[(D)]
$M(x)\leq M_0(x)M_1(x)=O((\log x)^{|\cG_0|}M_1(x))$.
\end{enumerate}
So, noting item (D), in order to prove that $M(x)=O(x^{\alpha+\epsilon})$, it is
sufficient to prove that $M_1(x)=O(x^{\alpha_1})$, for all
$\alpha_1\in(\alpha_0,\alpha+\epsilon)$.

Fix $\alpha_1\in(\alpha_0,\alpha+\epsilon)$.  We prove
$M_1(x)=O(x^{\alpha_1})$.  Let $n = \lceil\log_{2}x\rceil$.  Any
element $y$ of $\cM_{1} \cap [1,x]$ corresponds to a partition of $n$
as follows: Suppose $y = y_{1}y_{2} \dots y_{r}$ where $y_{1} \leq
y_{2} \leq \dots\leq y_{r}$ are elements of $\cG_1$.  Put $a_{i}=
\lfloor\log_{2}y_{i}\rfloor$.  Then $a_1+a_2+\dots a_r=m\leq n$, and
$0\leq a_1\leq a_2\leq\dots\leq a_r$.  Thus replacing $a_r$ with
$a_r'=a_r+n-m$, we see that any product $y=y_1y_2\dots
y_r\in\cM_1\cap[1,x]$ of $r$ elements $y_i$ of $\cG_1$ maps to a
partition of $n$ into at most $r$ pieces.  The number of such $y$
sequences $y_1,y_2,\dots,y_r$ with $\lfloor\log_2 y_i\rfloor = a_i$ is at
most
\[
G_1(2^{a_1+1}) G_1(2^{a_2+1}) \dots  G_1(2^{a_r+1})
\leq
2^{a_1\alpha_0}2^{a_2\alpha_0}\dots2^{a_r\alpha_0}
\leq 2^{n\alpha_0}\,.
\]
Now Hardy and Ramanujan showed that the number $p(n)$ of partitions of
$n$ is asymptotic to
\[
\exp(\pi\sqrt{2n/3})/4n\sqrt{3}=O(x^{\delta})\,,
\]
for all $\delta>0$.  So, choosing $\delta=\alpha_1-\alpha_0$,
we have
\[
M_1(x)\leq p(n) 2^{\lceil\log_2 x\rceil\alpha_0}
= O(x^{\alpha_0+\alpha_0})=O(x^{\alpha_1})\,.
\]

\subsection{Proofs using Generating Functions}

We will use generating functions
to show that these constructions do not increase the density of known
Hadamard orders.  
Since we are interested in the properties of products of
sets $\cC = \cA \cB$, functions of the form
$$
 \sum_{n \in \cA} z^{\log_2 n} 
$$
are useful, since multiplying elements corresponds to adding the
powers in terms of the series.  
In the context of smooth numbers, 
Bernstein \cite{bernstein} estimated such functions by looking at
$$
a(z) = \sum_{k \geq 0}  a_k z^k := \sum_{n \in \cA} z^{\lfloor \log_2 n\rfloor} .
$$
These series have many fewer terms, and so are easier to analyze.
Note that $a_k = A(2^k) - A(2^{k-1})$ is the number of $k$-bit elements of
$\cA$.  We will prove results about $a_k$, i.e. $A(x)$ for $x$ a power
of two, but since $A(x)$ is monotone increasing, and all the
coefficients of the generating function are nonnegative, this will suffice.

\begin{lemma}\label{lem:abc}
  Let $\cC$ be the set of products of
  elements of sets $\cA$ and $\cB$ with series $a(z)$ and $b(z)$.  
Then 
$$
c(z) \leq a(z) \frac{b(z)}{1-z}.
$$
\end{lemma}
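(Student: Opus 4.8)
The plan is to compare coefficients on both sides and to track, for each $k$, how a $k$-bit product $n = ab$ with $a \in \cA$, $b \in \cB$ can be decomposed. First I would fix $k$ and estimate $c_k$, the number of elements $n \in \cC$ with $\lfloor \log_2 n \rfloor = k$. Each such $n$ can be written (not necessarily uniquely) as $n = ab$ with $a \in \cA$, $b \in \cB$; if $\lfloor \log_2 a \rfloor = i$ and $\lfloor \log_2 b \rfloor = j$, then $2^{i+j} \le ab < 2^{i+j+2}$, so $\lfloor \log_2 n \rfloor$ is either $i+j$ or $i+j+1$. In particular, for a product landing in the $k$-bit range we need $i + j \in \{k-1, k\}$. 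Since distinct elements $n$ of $\cC$ must arise from at least one distinct pair $(a,b)$, we get the crude bound
\[
c_k \;\le\; \sum_{i+j \in \{k-1,k\}} a_i\, b_j \;=\; \sum_{i+j=k} a_i b_j \;+\; \sum_{i+j=k-1} a_i b_j .
\]

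Next I would recognize the right-hand side as a coefficient in a product of generating functions. The first sum $\sum_{i+j=k} a_i b_j$ is exactly the $k$-th coefficient of $a(z)b(z)$, and the second sum $\sum_{i+j=k-1} a_i b_j$ is the $k$-th coefficient of $z\,a(z)b(z)$. Hence $c_k \le [z^k]\bigl( a(z) b(z)(1+z) \bigr)$. To reach the cleaner stated bound I would note $1 + z \le \frac{1}{1-z} \cdot (1-z)(1+z) = \frac{1-z^2}{1-z}$; more directly, since all coefficients are nonnegative and $1 + z + z^2 + \cdots$ dominates $1 + z$ coefficientwise, we have $c_k \le [z^k]\bigl(a(z)\,b(z)/(1-z)\bigr)$ for every $k$. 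Because this holds coefficientwise and all series involved have nonnegative coefficients, it is exactly the meaning of the claimed inequality $c(z) \le a(z)\,b(z)/(1-z)$, which I would state as shorthand for the termwise comparison (as the paragraph preceding the lemma already sets up, via monotonicity of $C(x)$ and nonnegativity of coefficients).

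The only mildly delicate point—and the one I would be most careful about—is the passage from "each $n \in \cC$ comes from some pair $(a,b)$" to the counting inequality: one must make sure not to overcount or undercount. The safe direction is the one we need: the map $n \mapsto$ (any chosen representation $(a,b)$) is injective on $\cC$, so $c_k$ is at most the number of admissible pairs, which is what the double sum counts. There is no need for the representation to be unique, and the slack from $1+z$ versus $1/(1-z)$ is harmless since we only want an upper bound. So the argument is essentially a bookkeeping step on the supports of the three series, with the factor $1/(1-z)$ absorbing the ambiguity of whether the bit-length of a product equals $i+j$ or $i+j+1$; I do not anticipate a genuine obstacle, only the need to state the coefficientwise convention cleanly.
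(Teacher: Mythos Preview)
Your argument is correct and follows essentially the same idea as the paper's proof: both compare coefficients and use the injection from $k$-bit elements of $\cC$ into pairs $(a,b)$. The only cosmetic difference is that you track the bit-lengths $i,j$ of both factors and obtain the sharper intermediate bound $c_k \le [z^k]\,a(z)b(z)(1+z)$ before relaxing $1+z$ to $1/(1-z)$, whereas the paper fixes only the bit-length $k$ of $a$ and bounds $b$ by ``at most $n-k$ bits'' directly, landing on $\sum_k a_k B(2^{n-k})$, which is already the $n$-th coefficient of $a(z)b(z)/(1-z)$.
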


\begin{proof}
The coefficient of $z^n$ in 
$a(z) \frac{b(z)}{1-z} = a(z) b(z) (1 + z + z^2 \cdots)$
is
$$
\sum_{k=0}^n a_k B(2^{n-k}).
$$
Any $n$-bit element of $\cC$ can be written as a product of a $k$-bit
element of $\cA$ and an element of $\cB$ of at most $n-k$ bits.
\end{proof}





We may use the analytic properties of series like this to bound the
size of the corresponding counting function.
Flajolet and Sedgewick \cite{fs} give a wealth of such results.  
Their Theorem IV.7 relates the growth rate of power series
coefficients to singularities of the corresponding function:

\begin{theorem}\label{thm:bender}
If $f(z)= \sum f_n z^n$ has positive coefficients and is analytic at
$0$ and
$$
R = \sup \{ r \geq 0 | f {\rm \ is \ analytic \ at \ all \ points \
  of\ } 0 \leq z < r\}
$$
then $\lim \sup |f_n|^{1/n} = (1/R)$.
\end{theorem}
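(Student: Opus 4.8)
The statement to prove is Theorem~\ref{thm:bender}: for $f(z)=\sum f_n z^n$ with $f_n\geq 0$, analytic at $0$, with $R$ the supremum of radii $r$ such that $f$ is analytic on $[0,r)$, one has $\limsup|f_n|^{1/n}=1/R$. This is a classical Pringsheim-type statement, and the plan is to prove the two inequalities $\limsup|f_n|^{1/n}\leq 1/R$ and $\limsup|f_n|^{1/n}\geq 1/R$ separately, using the Cauchy--Hadamard formula together with Pringsheim's theorem on the boundary behaviour of series with nonnegative coefficients.

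\textbf{First inequality.} Let $\rho$ denote the radius of convergence of the power series $\sum f_n z^n$, so that by the Cauchy--Hadamard theorem $1/\rho=\limsup|f_n|^{1/n}$. I would first observe that $f$ is analytic on the open disc $|z|<\rho$, hence in particular analytic at every point of $[0,\rho)$; by the definition of $R$ as a supremum this gives $R\geq\rho$, i.e. $1/R\leq 1/\rho=\limsup|f_n|^{1/n}$. This direction requires only the elementary theory of power series and no positivity.

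\textbf{Second inequality.} For the reverse inequality I would invoke Pringsheim's theorem: if a power series has nonnegative coefficients and radius of convergence $\rho<\infty$, then the point $z=\rho$ is a singularity of the function it represents. Granting this, suppose for contradiction that $R>\rho$. Then by definition of $R$, $f$ is analytic at all points of $[0,R)$, and in particular analytic at the point $\rho\in[0,R)$ --- contradicting the fact that $\rho$ is a singularity. Hence $R\leq\rho$, i.e. $1/R\geq 1/\rho=\limsup|f_n|^{1/n}$. (If $\rho=\infty$ then $f_n^{1/n}\to 0$, $f$ is entire, $R=\infty$ as well, and the identity $0=0$ holds trivially; this degenerate case should be dispatched first.) Combining the two inequalities yields $\limsup|f_n|^{1/n}=1/R$.

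\textbf{Main obstacle.} The only non-routine ingredient is Pringsheim's theorem itself, so the substantive part of the proof is either to cite it (it is standard, e.g. in Flajolet--Sedgewick \cite{fs}, which the paper already references) or to sketch it. A self-contained argument runs as follows: assume $\rho=1$ after rescaling, and suppose $f$ extends analytically to a neighbourhood of $z=1$; then $f$ is analytic on a disc of radius $1+2\delta$ about the interior point $z=1/2$ for some $\delta>0$, so its Taylor expansion about $1/2$ converges there. Expanding that Taylor series and using $f^{(k)}(1/2)=\sum_n \binom{n}{k}f_n (1/2)^{n-k}$, all terms are nonnegative, so evaluating at the real point $1+\delta$ and rearranging the resulting double sum of nonnegative terms shows $\sum_n f_n(1+\delta)^n<\infty$, contradicting $\rho=1$. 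I expect the bookkeeping in this interchange-of-summation step to be the fiddliest point, but it is entirely standard and the cleanest route for the paper is simply to attribute Theorem~\ref{thm:bender} to \cite{fs} and present the two-line Cauchy--Hadamard-plus-Pringsheim deduction above.
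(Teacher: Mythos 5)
Your proof is correct, and it is the standard one. The paper itself gives no proof of Theorem~\ref{thm:bender}: it is stated verbatim as Theorem~IV.7 of Flajolet and Sedgewick \cite{fs}, who in turn establish it exactly as you do---Cauchy--Hadamard gives $1/\rho=\limsup|f_n|^{1/n}$ for the radius of convergence $\rho$, Pringsheim's theorem (their Theorem~IV.6) gives that $z=\rho$ is a singularity when the coefficients are nonnegative, and these two facts together force $R=\rho$. So you have supplied the proof the paper is implicitly deferring to, not an alternative route.
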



From Corollary~\ref{cor:paley} we have $s_k = \Theta(2^{(1-\epsilon)
k})$ for any $\epsilon>0$, so by the ratio test the radius of
convergence of $s(z)$ is $1/2$.  The coefficients of generating
functions for the other monoids have smaller growth, and so a larger
radius of convergence.  Theorem VI.12 of \cite{fs} shows that the size
of the product set $\cA \cB = \{ ab | a \in \cA, b \in \cB \}$ of two
sets with different growth rates is a constant times the size of the
larger set, proving Theorem~\ref{corA}:

\begin{theorem}\label{corA2b}
  Suppose $a(z) = \sum a_n z^n$ and $b(z) = \sum b_n z^n$ are power
series with radii of convergence $\alpha > \beta \geq 0$,
respectively.  Suppose $b_{n-1}/b_n$ approaches a limit $b$ as $n
\longrightarrow \infty$.  If $a(b) \neq 0$, then $c_n \sim a(b) b_n$,
where $\sum c_n z^n = a(z) b(z)$.
\end{theorem}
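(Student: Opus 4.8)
The plan is to reduce the statement $c_n\sim a(b)\,b_n$ to showing that $c_n/b_n\to a(b)$, and to obtain the latter by viewing the Cauchy product $c_n/b_n=\sum_{k=0}^{n}a_k\,(b_{n-k}/b_n)$ as a series that converges term by term to $\sum_{k\ge 0}a_k b^k=a(b)$, with the interchange of limit and summation justified by a dominated‑convergence estimate. First note that the hypothesis $b_{n-1}/b_n\to b$ forces, by the ratio test, the radius of convergence of $b(z)$ to equal $b$; hence $b=\beta<\alpha$, so in particular $b$ lies inside the disk of convergence of $a(z)$ and $a(b)$ is a genuine (finite) sum.

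First I would establish the termwise limit. Put $r_m=b_m/b_{m+1}$, so that $r_m\to b$. For a fixed $k$ and $n\ge k$ we have $b_{n-k}/b_n=\prod_{m=n-k}^{n-1}r_m$, a product of exactly $k$ factors each tending to $b$ as $n\to\infty$; hence $b_{n-k}/b_n\to b^k$, and the $k$-th summand $a_k\,(b_{n-k}/b_n)$ tends to $a_k b^k$.

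The substantive step is a bound on the summands that is uniform in $n$. Since $a(z)$ has radius of convergence $\alpha>b$ (cf.\ Theorem~\ref{thm:bender}), fix $\rho\in(b,\alpha)$; then $|a_k|\le C\rho^{-k}$ for all $k$ and some constant $C$. Fix $\epsilon>0$ with $b+\epsilon<\rho$ and choose $N_0$ with $r_m<b+\epsilon$ for all $m\ge N_0$. For $0\le k\le n$ with $n$ large, split $b_{n-k}/b_n=\prod_{m=n-k}^{n-1}r_m$ into the at most $N_0$ factors whose index is $<N_0$ — whose product is bounded by the fixed constant $P=\prod_{m=0}^{N_0-1}\max(r_m,1)$ — and the remaining at least $k-N_0$ factors, each $<b+\epsilon$. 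This yields $b_{n-k}/b_n\le C'(b+\epsilon)^k$ with $C'=P\max\bigl(1,(b+\epsilon)^{-N_0}\bigr)$ independent of $n$ and $k$ (the $(b+\epsilon)^{-N_0}$ is needed only when $b+\epsilon<1$). Hence $\bigl|a_k\,(b_{n-k}/b_n)\bigr|\le CC'\,\bigl((b+\epsilon)/\rho\bigr)^k$, a convergent geometric bound independent of $n$.

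Finally, extending $a_k(b_{n-k}/b_n)$ by $0$ for $k>n$, dominated convergence for series combines the termwise limits with this uniform bound to give $c_n/b_n=\sum_{k\ge 0}a_k\,(b_{n-k}/b_n)\to\sum_{k\ge 0}a_k b^k=a(b)$; since $a(b)\neq 0$, this is exactly $c_n\sim a(b)\,b_n$. I expect the one delicate point to be the uniform estimate on $b_{n-k}/b_n$: for small $n-k$ the telescoping product of ratios contains a few early factors $r_0,\dots,r_{N_0-1}$ that are not yet close to $b$, and the argument works only because there are at most $N_0$ of them, so they can be absorbed into a single constant.
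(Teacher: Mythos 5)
The paper does not actually prove Theorem~\ref{corA2b}: it is quoted directly as Theorem~VI.12 of Flajolet and Sedgewick~\cite{fs}, so there is no in-paper argument to compare yours against. Your proposal supplies a self-contained elementary proof, and I believe it is correct. The route---write $c_n/b_n=\sum_{k\le n}a_k(b_{n-k}/b_n)$, show each term tends to $a_kb^k$ via the telescoping product $b_{n-k}/b_n=\prod_{m=n-k}^{n-1}r_m$, and interchange limit and sum using a geometric dominating bound---is the standard direct argument for this kind of coefficient-transfer theorem, rather than the singularity-analysis machinery that Flajolet--Sedgewick develop around their Theorem~VI.12; your version is more elementary and arguably better suited to being inlined in an appendix like this one. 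You also correctly identify and handle the one genuinely delicate step, the uniform estimate $b_{n-k}/b_n\le C'(b+\epsilon)^k$ with $C'$ independent of both $n$ and $k$: I checked that $C'=P\max\bigl(1,(b+\epsilon)^{-N_0}\bigr)$ works in both the regime $n-k\ge N_0$ (all $k$ ratios are $<b+\epsilon$) and the regime $n-k<N_0$ (at most $N_0$ early ratios, absorbed into $P$, with the exponent gap between $k$ and $n-N_0$ absorbed into $(b+\epsilon)^{-N_0}$ when $b+\epsilon<1$).

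One caveat worth making explicit: the argument tacitly assumes $b_n>0$ for all large $n$, so that the ratios $r_m$ are positive, $\max(r_m,1)$ is meaningful, and $b=\beta$ rather than merely $|b|=\beta$. The theorem as worded does not impose this, although it is automatic in the paper's application since $b(z)$ is a generating function with nonnegative coefficients; a version at the stated level of generality would need absolute values inserted in a few places (e.g.\ $|r_m|$, $|b_{n-k}/b_n|$, and $\rho\in(|b|,\alpha)$). With that understood, the proof is sound.
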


Finally consider the monoid generated by a set $\cA$.  The generating
function for the monoid will be
\[
{\rm Exp}(a(z)) := \exp \left( a(z) + \frac{1}{2} a(z^2) +
  \frac{1}{3} a(z^3) +  \cdots \right).
\]
This function has the same radius of convergence as $a(z)$ (see
Section IV.4 of \cite{fs}), giving Theorem~\ref{LemmaB}.


\end{document}